\newtheorem{thm}{Theorem}
\newtheorem{propo}[thm]{Proposition}
\theoremstyle{definition}
\newtheorem{observation}{Observation}
\def\dim{\rm{dim}}
\def\E{\rm{E}}
\title{Natural orbital networks}
\author{Oliver Knill}
\date{November 24, 2013}
\address{
        Department of Mathematics \\
        Harvard University \\
        Cambridge, MA, 02138
        }
\subjclass{Primary:  05C82, 90B10,91D30,68R10  }
\keywords{Graph theory}
\begin{document}
\maketitle
\begin{abstract}
Given a finite set $T$ of maps on a finite ring $R$,
we look at the finite simple graph $G=(V,E)$ with vertex set $V=R$ and edge set
$E=\{ (a,b) \; | \; \exists t \in T, \; b=t(a), \; {\rm and} \; b \neq a \}$.
An example is when $R=Z_n$ and $T$ consists of a finite set of quadratic maps
$T_i(x)=x^2+a_i$. Graphs defined like that have a surprisingly rich structure.
This holds especially in an algebraic setup when $T$ is generated by polynomials 
on $Z_n$. The characteristic path length $\mu$ and the mean
clustering coefficient $\nu$ are interlinked by global-local quantity 
$\lambda=-\mu/\log(\nu)$ which often appears to have a limit for $n \to \infty$  like for
two quadratic maps on a finite field $Z_p$. We see that for one quadratic map $x^2+a$, 
the probability to have connectedness goes to zero and for two quadratic maps, the 
probability goes to $1$, for three different quadratic maps $x^2+a,x^2+b,x^2+c$ on $Z_p$, 
we always appear to get a connected graph for all primes.
\end{abstract}

\begin{center}
In Memory of Oscar Lanford III.
\end{center}

\section{Introduction}

The interest in applied graph theory has exploded during the last decade
\cite{Goyal,BornholdtSchuster,GoodmanORourke,CohenHavlin,newman2010,nbw2006,
WassermanFaust,Jackson,SmallWorld, BallobasKozmaMiklo,ibe,vansteen,newman2010,Meester,Easley,shen}.
The field is interdisciplinary and has relations with other subjects like computer, biological
and social sciences and has been made accessible to a larger audience in books like
\cite{SixDegrees,Buchanan, Linked,Sync,Connected}. \\

We look here at generalizations of Cayley graphs which can be used to model networks  
having statistical features similar to known random networks
but which have an appearance of real networks. We have mentioned some connections with 
elementary number theory already in \cite{KnillMiniatures}. 
The graphs are constructed using {\bf transformation monoids} analogously as 
{\bf Cayley graphs} are defined by a group acting on a group. The deterministic construction 
produces graphs which resemble empirical networks seen in synapses, chemical or biological 
networks, the web or social networks, and especially in peer to peer networks. \\

A motivation for our construction is the iteration of quadratic maps in the complex
plane which features an amazing variety of complexity and mathematical content.
If the field of complex numbers $C$ is replaced by a finite ring $R$ like $Z_n$, we 
deal with arithmetic dynamical systems in a number theoretical setup.
This is natural because a computer simulates differential equations on a finite set.
It is still not understood well, why and how finite approximations of
continuum systems share so many features with the continuum. Examples of such
investigations are \cite{vivaldi,Rannou,lanford98}.
The fact that polynomial maps produce arithmetic chaos is exploited by pseudo 
random number generators like the Pollard rho method for integer factorization \cite{Riesel}. \\

A second motivation is the construction of deterministic realistic networks.
Important quantities for networks are the characteristic path length $\mu$ and
the mean cluster coefficient $\nu$. Both are averages: the first one is the average of 
$\mu(x)$, where $\mu(x)$ is the average over all distances $d(x,y)$ with $y \in V$,
the second is the average of the fraction of the number of edges in
the sphere graph $S(x)$ of a vertex $x$ divided by the number of edges of the complete graph
with vertices in $S(x)$. Nothing changes when replace $\mu$ with the {\bf median path length} 
and $\nu$ with the {\bf global cluster coefficient} which is the frequency of 
oriented triangles $3 v_2/t_2$ within all oriented connected vertex triples. \\

A third point of view is more algebraic. As we learned from \cite{Cameron2011},
orbital networks relate to {\bf automata}, edge colored directed graphs with possible self loops
and multiple loops. An automaton can encode the monoid acting on the finite set.
But the graph of the monoid $T$ is obtained by connecting two vertices $x,y$ if there
is no element $T$ with $T(x)=T(y)$. This graph is the null graph if $M$ is synchronizing
(there is $f$ which has has a single point as an image) and the complete group if $M$ is a
permutation subgroup of the full permutation group \cite{Cameron}.
An example of a problem in that field is 
the task to compute the probability that two random endofunctions generate
a synchronizing monoid. This indicates that also the mathematics of graphs generated by 
finitely many transformations can be tricky. We became also aware of \cite{Steinberg}, 
who developed a theory of finite transformation monoids. In that terminology, 
the graph generated by $T$ is called an {\bf orbital digraph}. 
This prompted us to address the finite simple graphs under consideration 
as {\bf orbital networks}. \\

That the subject has some number theoretical flavour has been indicated already \cite{KnillMiniatures}.
It shows that elementary number theory matters when trying to understand connectivity properties of
the graphs. We would not be surprised to see many other connections. \\

Modeling graphs algebraically could have practical values. Recall that in computer vision, various
algorithms are known to represent objects. The highest entropy version is to give a triangularization
of a solid or to give a bitmap of a picture. Low entropy realizations on the other hand store the object
using mathematical equations like inequalities or polynomials in several variables. It is an AI task
to generate low entropy descriptions. This can mean to produce vector graphics representations of a given
bitmap, or to use B\'ezier curves and Nurb surfaces to build objects. 
Fractal encoding algorithms have been used to encode and compress pictures: iterated function
systems encode similar parts of the picture \cite{Barnsley}. Similarly, an application of orbital networks 
could be to realize parts of networks with low entropy descriptions in such a way that relevant 
statistical properties agree with the actual network. \\

In the context of computer science, monoids are important to describe languages as {\bf synthactic monoids}.
An analogue of Cayley's theorem tells that every group is a subgroup of a transformation group, every finite 
monoid can be realized as a transformation monoid on a finite set $X$.
When seen from an information theoretical point of view, 
an orbital network describes a language, where the vertices are the alphabet and 
the transformations describe the rules. Every finite path in the graph now describes a possible
word in the language. \\

Finally, there is a geometric point of view, which actually is our main interest. Graphs share
remarkably many parallels with Riemannian manifolds. Key results on Riemannian manifolds or
more general metric spaces with cohomology have direct analogues for finite simple graphs. While 
functionals like the Hilbert action on Riemannian manifolds are difficult to study, finite simple 
graphs provide a laboratory to do geometry, where one can experiment with a modest amount of effort. 
We will look elsewhere at the relation of various functionals on graphs, like the Euler characteristic, 
characteristic path length, Hilbert action. It turns out that some of the notions known to graph theory
only can be pushed to Riemannian manifolds. Functionals studied in graph theory can thus be studied also
in Riemannian geometry.  \\

\begin{figure}
\scalebox{0.2}{\includegraphics{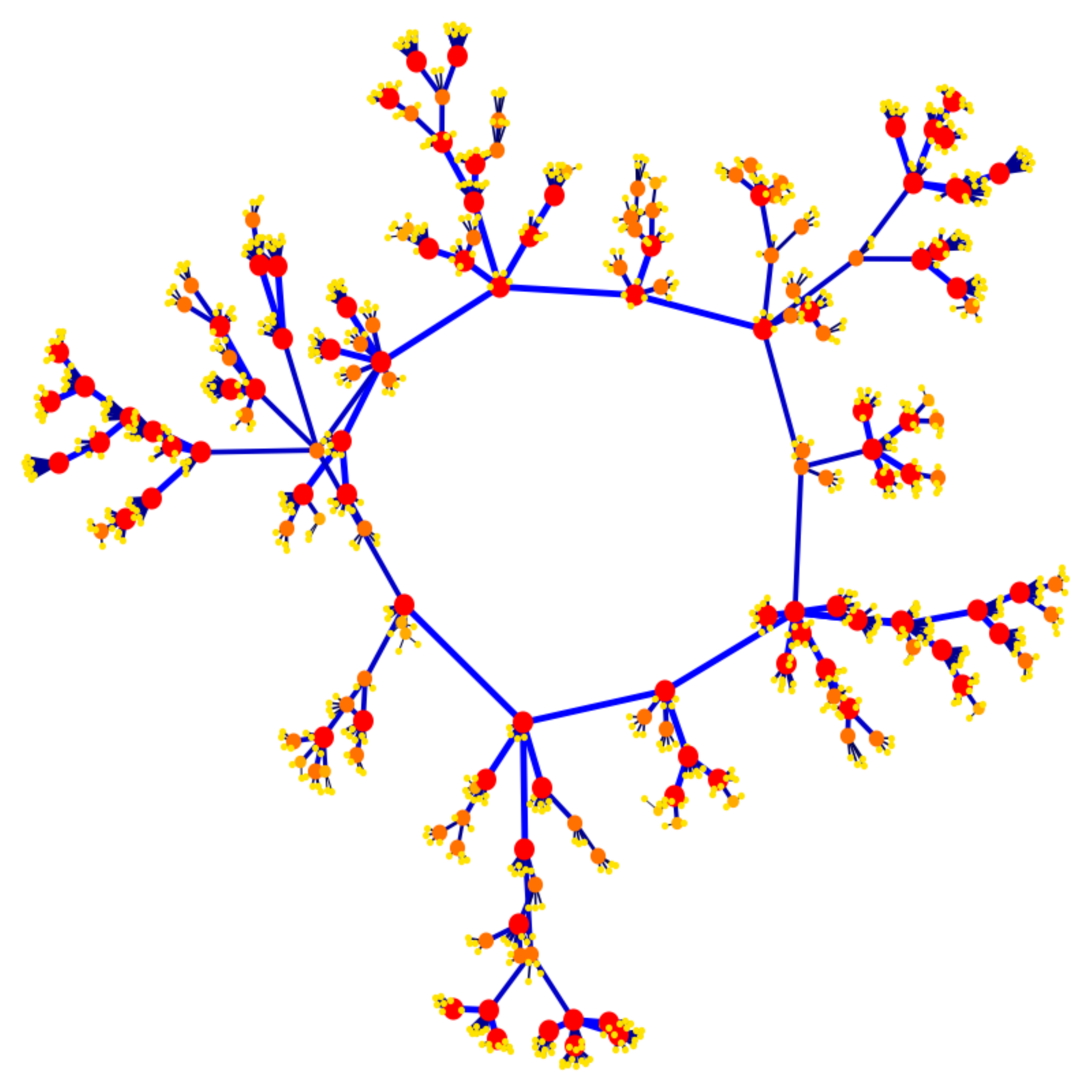}}
\scalebox{0.2}{\includegraphics{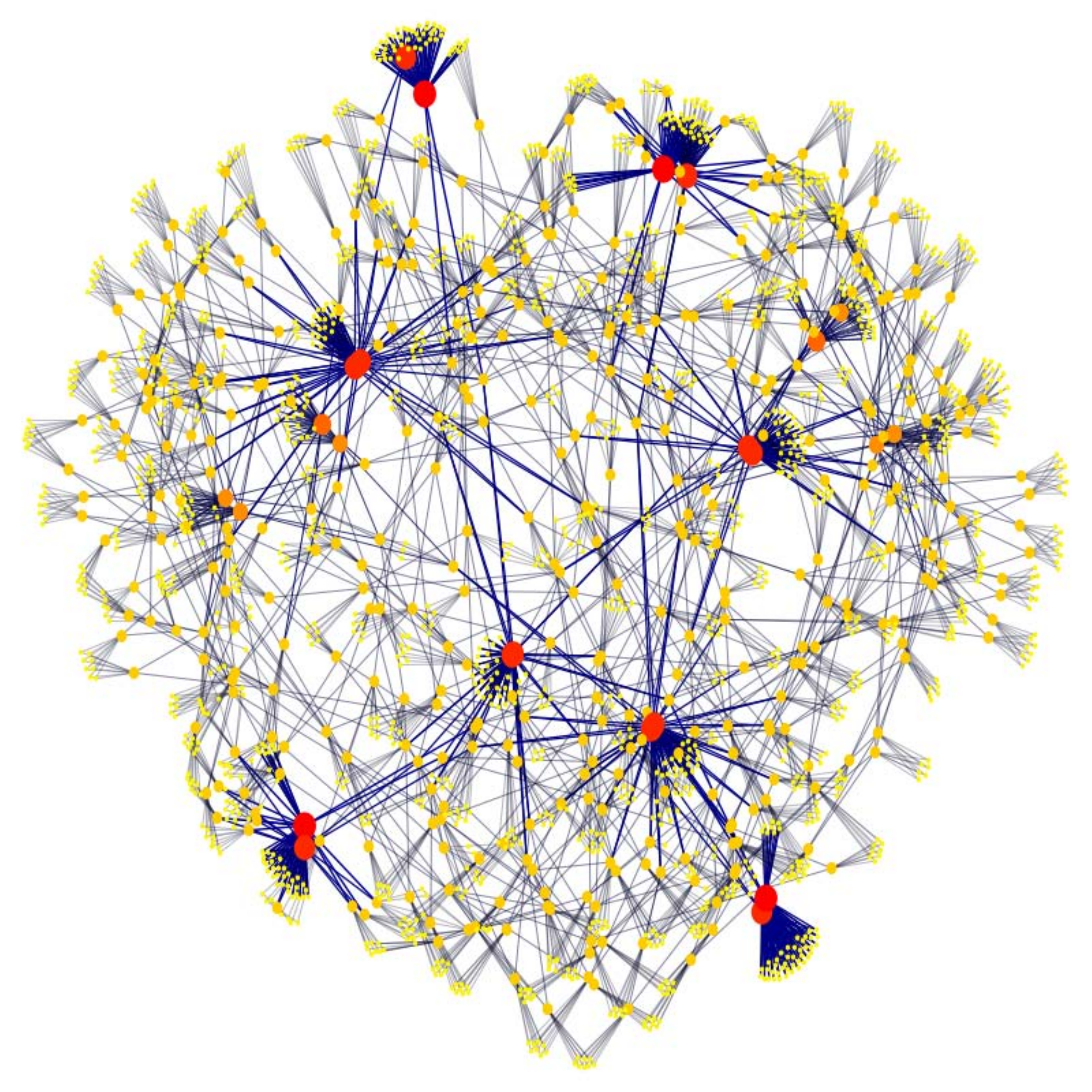}}
\caption{
The first graph $G_1$ is obtained from a single quadratic map $f(x)=x^2+226$ on $Z_{1001}$. It
has diameter $14$, average degree $\delta=2$, characteristic path length $\mu=8.6$ and $\nu=0$. 
It resembles a typical friendship graph \cite{Jackson}.
The second graph is generated by $f(x)=x^2+1,g(x)=x^2+2$ on $Z_{2000}$. It has diameter $9$,
$\delta=3.99,\mu=5.8,\nu=0.0024$ and $\lambda=0.964$. 
}
\end{figure}

\begin{figure}
\scalebox{0.2}{\includegraphics{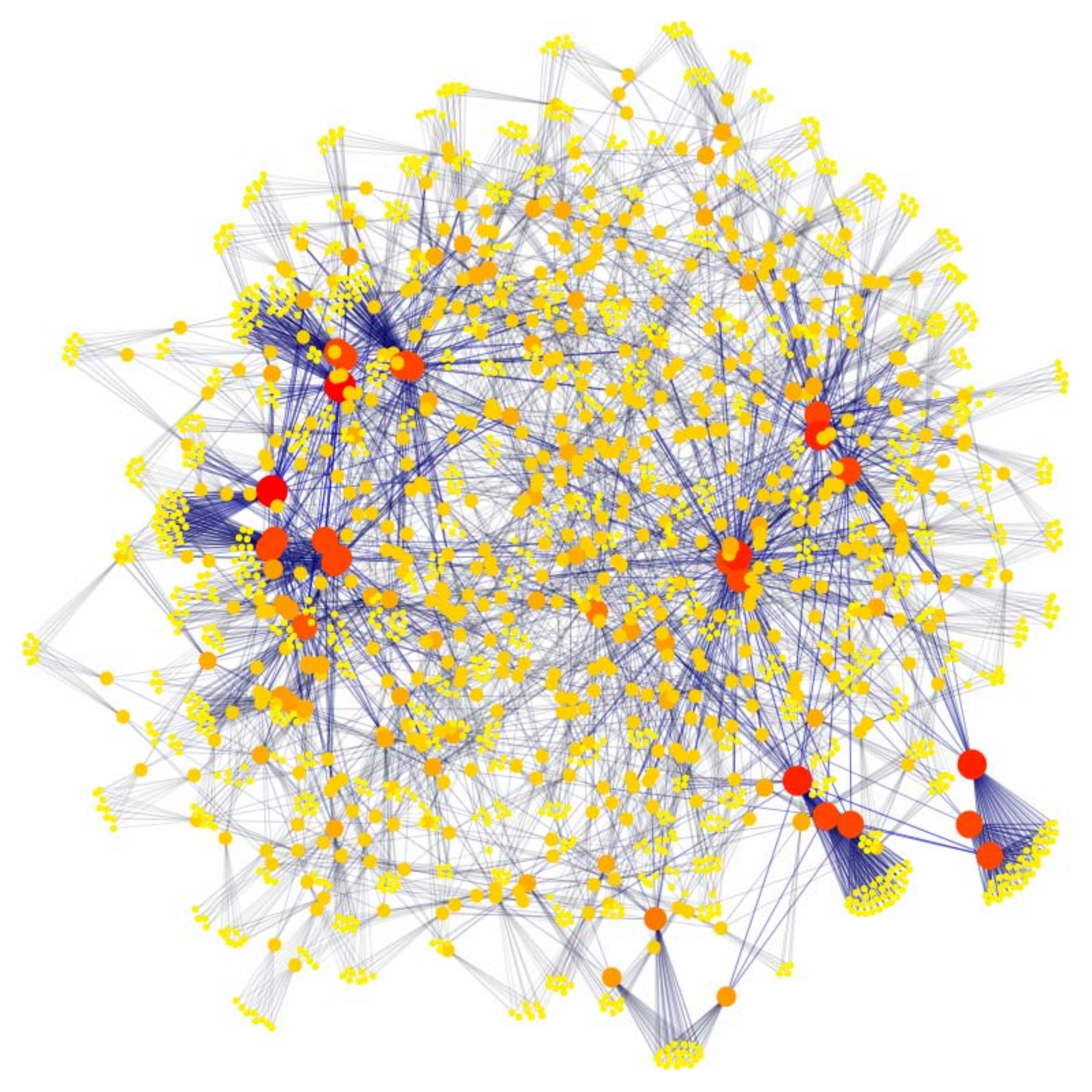}}
\scalebox{0.2}{\includegraphics{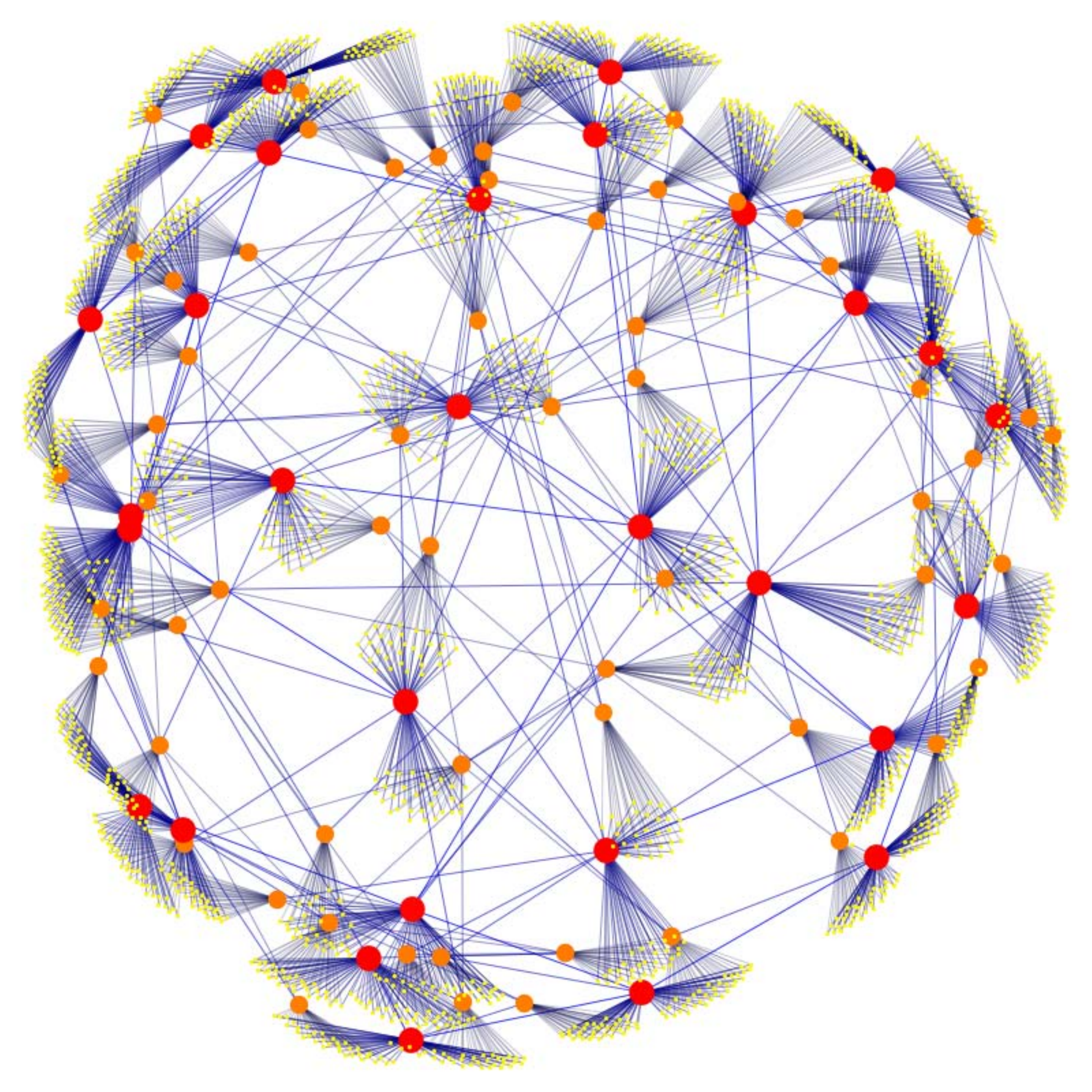}}
\caption{
We see the graph  generated by $f(x)=x^2+1,g(x)=x^2+31,h(x)=x^2+51$ on $Z_{2000}$. It has
diameter $8$, average degree $6$, $\mu=4.7, \nu=0.0084$ and $\lambda=0.986$.
The second  graph is generated by $T(x)=2^x+11$ and $S(x)=3^x+5$ on $Z_{2002}$.
It has diameter $7$, $\mu=4.6, \nu =0.00097$ and $\lambda=0.66$.}
\end{figure}

\begin{figure}
\scalebox{0.12}{\includegraphics{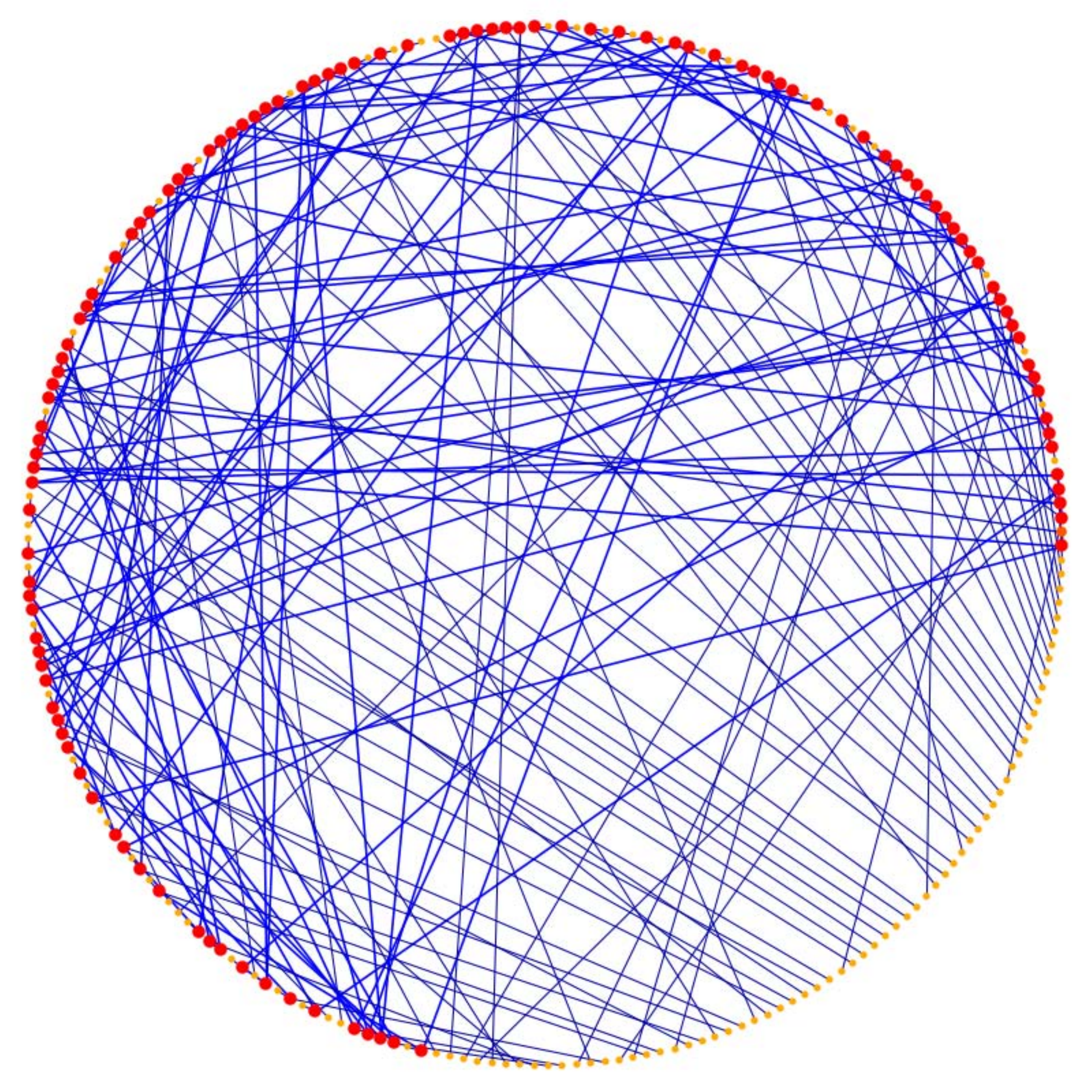}}
\scalebox{0.12}{\includegraphics{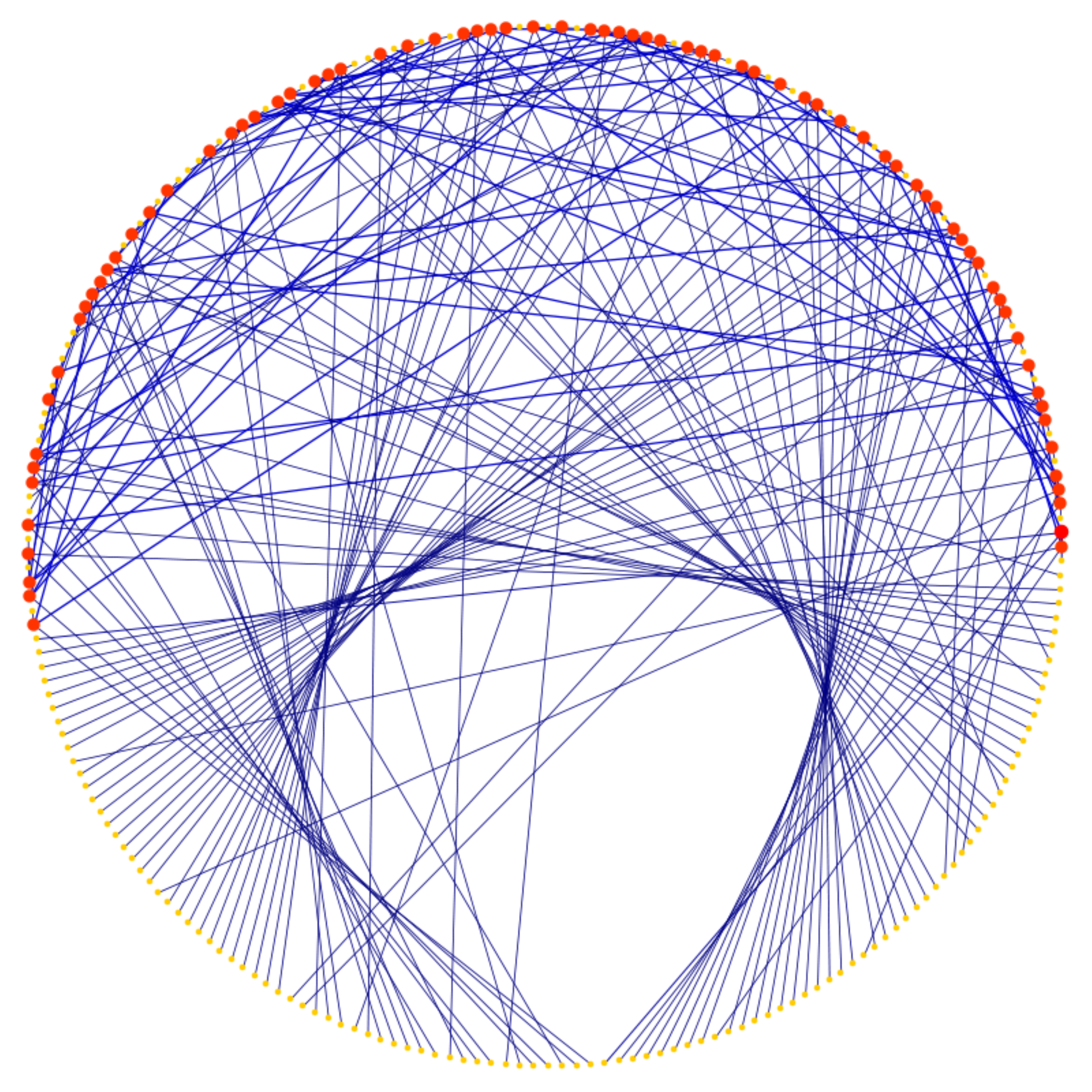}}
\caption{
The graph on $Z_{229}$ generated by $T(x)=x^2+57$ and a graph
on $Z_{229}$ generated by $T(x)=2^x+13$ are shown with a circular embedding. 
}
\end{figure}

{\bf Acknowledgement}. The graph construction described here emerged during a few meetings
in September and October 2013 with Montasser Ghachem who deserves equal credit 
for its discovery \cite{GK1}, but who decided  not to be a coauthor of this paper.

\section{Construction}

Given a fixed ring $R$ like $R=Z_n$ and a finite set $T$ of maps $R \to R$,
a digraph is obtained by taking the set of vertices $V=R$ and edges $(x,T_i(x))$, 
where $T_i$ are in $T$. By ignoring self loops, multiple connections as well as directions, we obtain a 
finite simple graph, which we call the {\bf dynamical graph} or {\bf orbital network} 
generated by the system $(R,T)$. The name has been used before and distinguishes
from other classes of dynamically generate graphs: either by 
random aggregation or by applying deterministic or random transformation rules to a 
given graph.  \\

\begin{figure}[H]
\scalebox{0.15}{\includegraphics{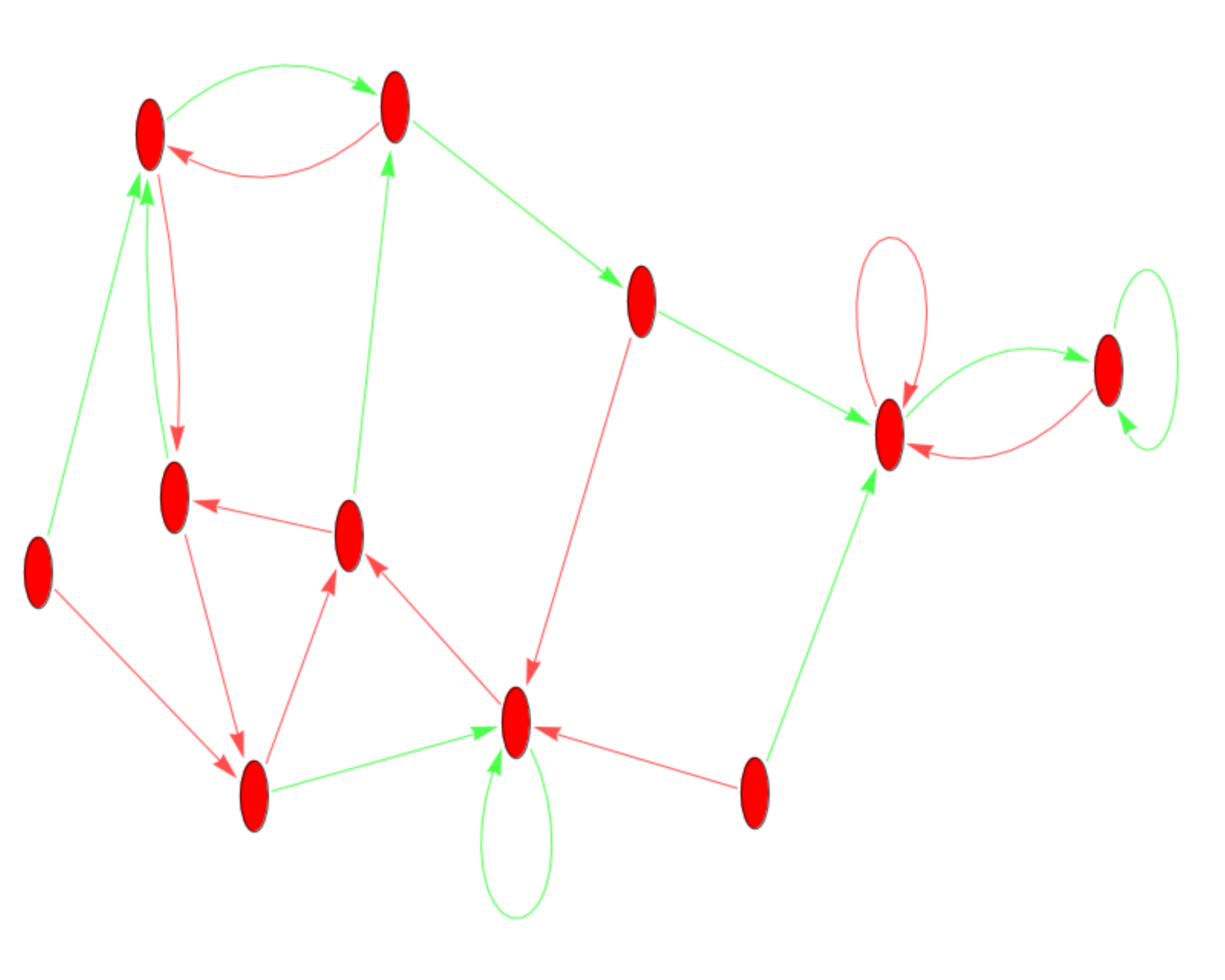}}
\scalebox{0.15}{\includegraphics{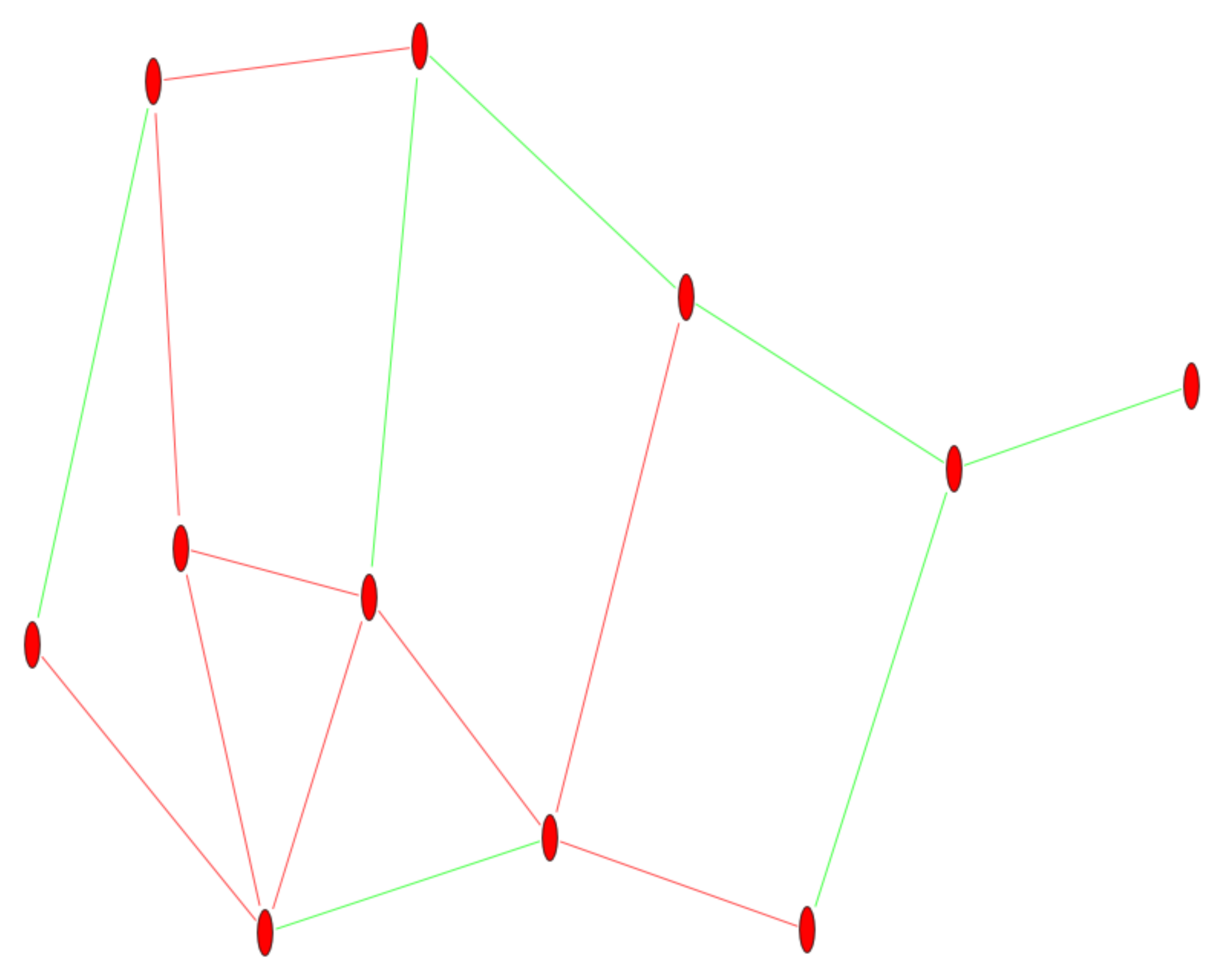}}
\caption{
The two maps $T_1(x)=x^2+3$ and $T_2(x)=x^2+2$ on $Z_{11}$ define
an {\bf orbital digraph} with edges $x \to T_i(x)$. Ignoring the direction, 
multiple connections and self loops we get a finite simple graph which we 
call an {\bf orbital network}.}
\end{figure}

Orbital networks have been used since a long time. If a group given by a finite set of 
generators $T$ acts on itself, the visualization is called the {\bf Cayley graph} of the group. It is
a directed graph but leaving away the direction produces a finite simple graph
which is often used as a visualization for the group. These graphs are by definition
vertex transitive and especially, the vertex degree is constant.
For one-dimensional dynamical systems given by maps defined on finite sets, the
graph visualization has been used at least since Collatz in 1928 \cite{Wirsching}.
It appears also in demonstrations \cite{WolframState1,WolframState2} or
work \cite{Kayama} on cellular automata. The graphs are
sometimes called {\bf automata networks} \cite{Robert}. 
The iteration digraph of the quadratic map has been studied in \cite{SomerKrizek}:
As has been noted first by Szalay in 1992, as has been a symmetry if $n=2$ modulo $8$ 
or $n=4$ modulo $8$.  Szalay also noted that the number of fixed points of $x \to x^2$ is 
$2^{\omega(n)}$ where $\omega(n)$ is the number of distinct primes dividing $n$. 
The miniature theorem on Fermat primes rediscovered in \cite{KnillMiniatures} has already been 
known to Szalay in 1992 and Rogers in 1996.
Somer and Krizek show that there exists a closed loop of length $t$ if and only if 
$t=ord_n(2)$ for some odd positive divisor $d$ of the Carmichael lambda function $\lambda(n)$
and where ${\rm ord}_n(g)$ is the multiplicative order of $g$ modulo $n$.  \\
 
When looking at classes of maps, we get a probability space of graphs.
For example, if we consider two quadratic maps $T_i(x)=x^2+c_i$ with $c_i \in R$. 
we get a probability space $R^2$ of parameters $(c_1,c_2)$.
With $k$ quadratic maps, the probability space $R^k$ presents itself. \\

An other example is to take the ring $R=Z_n \times Z_n$ and
to consider the H\'enon type maps $T_i(x,y) = (x^2+c_i-y,b_i x)$, where $c_i$ are parameters,
where again with $k$ maps the probability space is $R^k$. If $b_i=1$, the maps are invertible and
generate a subgroup of the permutation group. An interesting class of graphs
obtained with $T_i(x) = [x^{\alpha} + c_i]$ on $Z_n$, where $[x]$ is the floor function
will be studied separately.  We focus first on maps which are defined in an arithmetic way.
An other example is by replacing polynomials with exponential maps like with
$T_i(x) = 2^x+c_i$ on $Z_n$ or linear matrix maps like $T_i(x,y) = A_i x$ on $Z_n^k$ or
higher degree polynomial maps like $T_i(x,y) = a x^2 +b y^2 + c x y + c_i$.
An example of algebro-geometric type is to look at the ring $R= K[x]/I$, where $K[x]$ is
the polynomial ring over a finite field $K$ and $I$ an ideal, then consider the two maps
$T_1(f) = f^2+a, T_2(f) = f^2 +b$, where $a,b$ are fixed polynomials. \\

\begin{observation}
Every finite simple graph is an orbital network with $d$ generators,
where $d$ is the maximal degree of the graph. 
\end{observation}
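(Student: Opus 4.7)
Given a finite simple graph $G=(V,E)$ of maximum degree $d$, the plan is to realize $G$ as an orbital network by building $d$ generators directly from the adjacency structure of $G$. First I would identify $V$ bijectively with the ring $R = Z_n$, where $n = |V|$, so that $V$ itself may serve as the vertex set of an orbital network in the sense of Section~2. No algebraic compatibility is required of the maps, only that they be functions $R \to R$.

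Next, at each vertex $v \in V$ I would fix an arbitrary ordering $w_1^v, w_2^v, \ldots, w_{\deg(v)}^v$ of its $G$-neighbors and define $d$ total functions $T_1, \ldots, T_d \colon R \to R$ by
$$
T_i(v) = \begin{cases} w_i^v & \text{if } i \le \deg(v), \\ v & \text{otherwise.} \end{cases}
$$
The extra values $T_i(v)=v$ produce only self-loops, which the orbital-network construction discards.

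Finally, I would verify that the orbital network generated by $\{T_1,\ldots,T_d\}$ coincides with $G$. Every edge $\{u,v\} \in E$ appears because $v = w_i^u$ for some $i \le \deg(u)$, so $v = T_i(u) \ne u$; conversely, every non-self-loop edge $\{v,T_i(v)\}$ produced by the construction satisfies $T_i(v) = w_i^v$, which by definition is a $G$-neighbor of $v$. This is a direct unwinding of the definitions in Section~2.

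The pleasant feature of the argument is that there is no genuine obstacle. In particular, no edge-coloring result (such as Vizing's theorem) is needed, because the generators are not required to be injective or to decompose $E$ into matchings; each $T_i$ may merge several vertices onto the same image, and the duplicated, reversed, or self-loop edges produced by different $T_i$'s are simply collapsed when one passes from the orbital digraph to the undirected orbital network. The only design choice is the per-vertex ordering of neighbors, and any choice works.
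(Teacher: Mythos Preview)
Your argument is correct and essentially identical to the paper's: take $R=Z_n$, order the neighbors of each vertex, and set $T_i(x)$ to be the $i$th neighbor when it exists and $x$ otherwise. Your write-up is in fact more thorough, since you explicitly verify both inclusions between the edge sets, which the paper leaves implicit.
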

\begin{proof}
If the graph has $n$ vertices, take
$R=Z_n$. Now label the elements in each sphere $S(x)$ with $y_1,\dots, y_{k(x)} \leq d$.
Now define $T_i(x)=y_i$ for $i \leq k(x)$ and $T_i(x)=x$ for $i>k(x)$.
\end{proof}

We often can use less generators. Of course our interest is not for general graphs,
but rather for cases, where $T_i$ are given by simple arithmetic formulas, where we have a natural
probability space of graphs. For example, if $T$ is generated by $d$ polynomials
$T_i$ on $Z_n$, then $Z_n^d$ equipped with a counting measure is a natural probability space. \\

An other natural example is if $T_i$ are random permutations on $Z_n$. This is a random model
but the statistics often mirrors what we see in the case of arithmetic transformations, only
that in the arithmetic case, the $n$ dependence can fluctuate, as it depends on the factorization 
of $n$.  \\

\begin{observation}
If the monoid generated by $T$ can be extended to a group,
then a connected orbital network is a factor of a Cayley graph.
\end{observation}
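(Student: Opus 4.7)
The plan is to exhibit the orbital network as the image of a Cayley graph of the extended group under the natural orbit map. Let $M=\langle T\rangle$ denote the transformation monoid generated by $T$. By hypothesis, $M$ is a submonoid of a group $G$ acting on $R$ (so every element of $T$, and hence of $M$, is a bijection of $R$, and inverses lie in $G$). First I would note that since the edges of the orbital network are undirected, a path from $a$ to $b$ corresponds to a sequence of moves each of the form $v \mapsto t(v)$ or $v \mapsto t^{-1}(v)$ for some $t\in T$. Connectedness of the orbital network therefore forces the subgroup $\langle T\cup T^{-1}\rangle \le G$ to act transitively on $R$.

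Next I would fix a basepoint $x_0\in R$ and consider the orbit map
\[
\pi: G \longrightarrow R, \qquad \pi(g)=g(x_0).
\]
By the transitivity just established, $\pi$ is surjective. Form the Cayley graph $\Gamma=\mathrm{Cay}(G,T)$ with vertex set $G$ and (undirected) edges $\{g,\,tg\}$ for $g\in G$, $t\in T$. The claim is that $\pi$ is a graph homomorphism from $\Gamma$ onto the orbital network, collapsing possible self-loops. Indeed, for any edge $\{g,tg\}$ of $\Gamma$, we have $\pi(tg)=t(g(x_0))=t(\pi(g))$, so $\{\pi(g),\pi(tg)\}$ is either an edge of the orbital network (when $t(\pi(g))\neq \pi(g)$) or collapses to a single vertex. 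In either case the image pair is a legal configuration in the simple orbital network, which is exactly the sense in which the orbital network is a factor (quotient) of $\Gamma$.

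The main obstacle I anticipate is a matter of precision rather than depth: the word ``factor'' must be interpreted to accommodate the passage from a digraph with possible loops and multi-edges (Cayley graph) to the simple undirected orbital network. I would address this by defining the factoring explicitly via the fibers of $\pi$: two vertices $g,h$ of $\Gamma$ are identified iff $g(x_0)=h(x_0)$, i.e.\ iff $g^{-1}h$ lies in the stabilizer $G_{x_0}$. Under this identification, the quotient multigraph of $\Gamma$ is isomorphic to the Schreier-type coset graph of $G/G_{x_0}$ with respect to $T$; stripping self-loops and multiple edges yields the orbital network. A minor subtlety worth flagging is that when $M$ itself is not a group, one has to verify that enlarging $T$ to $T\cup T^{-1}$ (only inside $G$, for the Cayley graph) is consistent with the simple undirected edge relation in the orbital network — and this is immediate from the symmetrization step used to define the orbital network in the first place.
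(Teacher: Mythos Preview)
Your proposal is correct and follows essentially the same approach as the paper: fix a basepoint $x_0$, and use the orbit map $g\mapsto g(x_0)$ as a surjective graph homomorphism from the Cayley graph onto the orbital network. Your treatment is in fact more careful than the paper's, which is quite terse; in particular, your explicit justification of transitivity via undirected paths as words in $T\cup T^{-1}$, and your discussion of the quotient by the stabilizer $G_{x_0}$ and the need to strip loops and multi-edges, make precise what the paper leaves implicit.
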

\begin{proof}
If the monoid generated by $T$ is a group $U$, then 
all $T_i$ are permutations. Take a vertex $x_0 \in V$. Since the graph is connected, $Ux_0$ is
the entire vertex set. The graph homomorphism $\phi(f) = f x_0$
shows that the graph $G$ is a factor of the Cayley graph of $G$ generated by $T$. 
\end{proof}

Remark. Embedding the monoid in a group using a
Grothendieck type construction is not always possible.

\section{Comparison with real networks}

\begin{figure}[H]
\scalebox{0.09}{\includegraphics{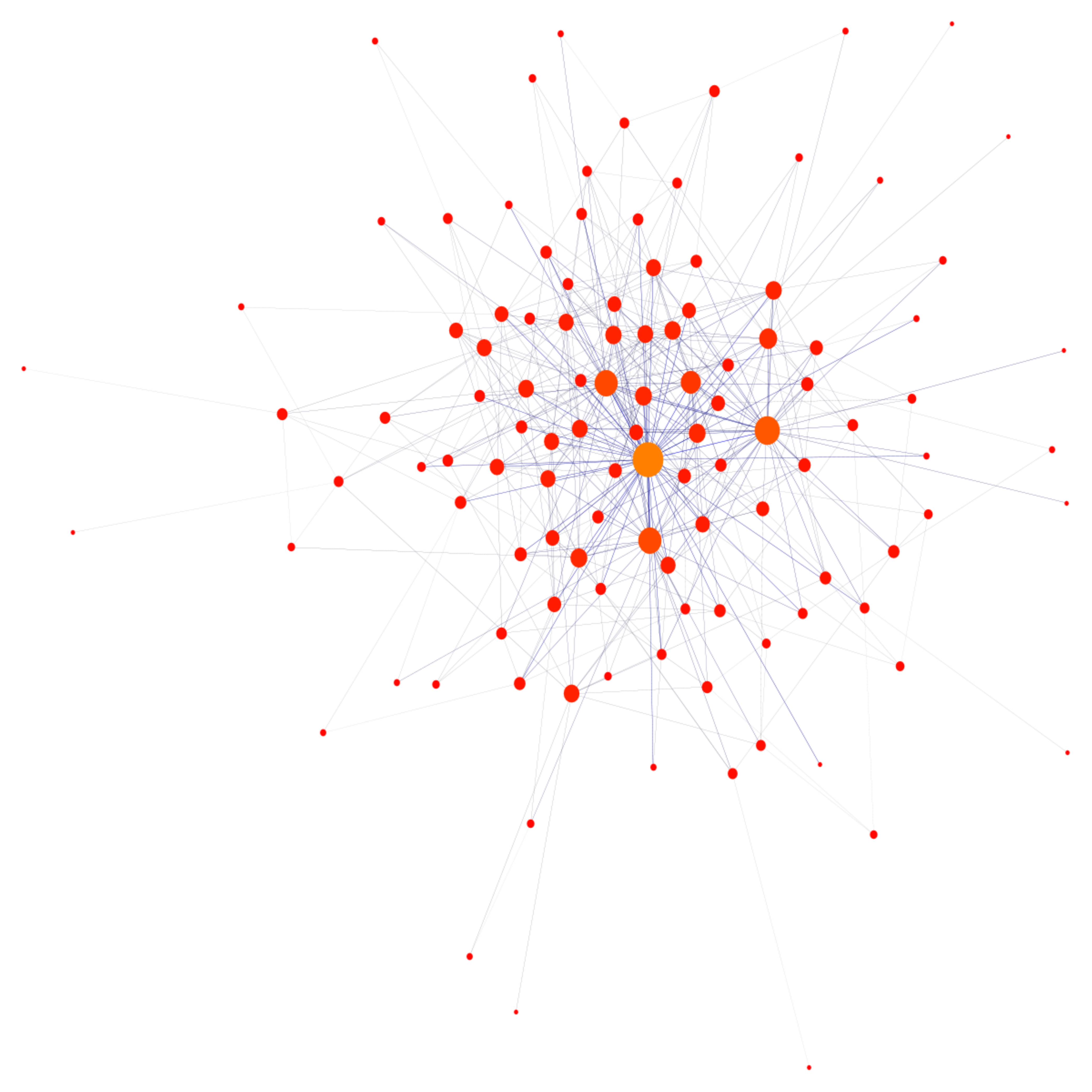}}
\scalebox{0.09}{\includegraphics{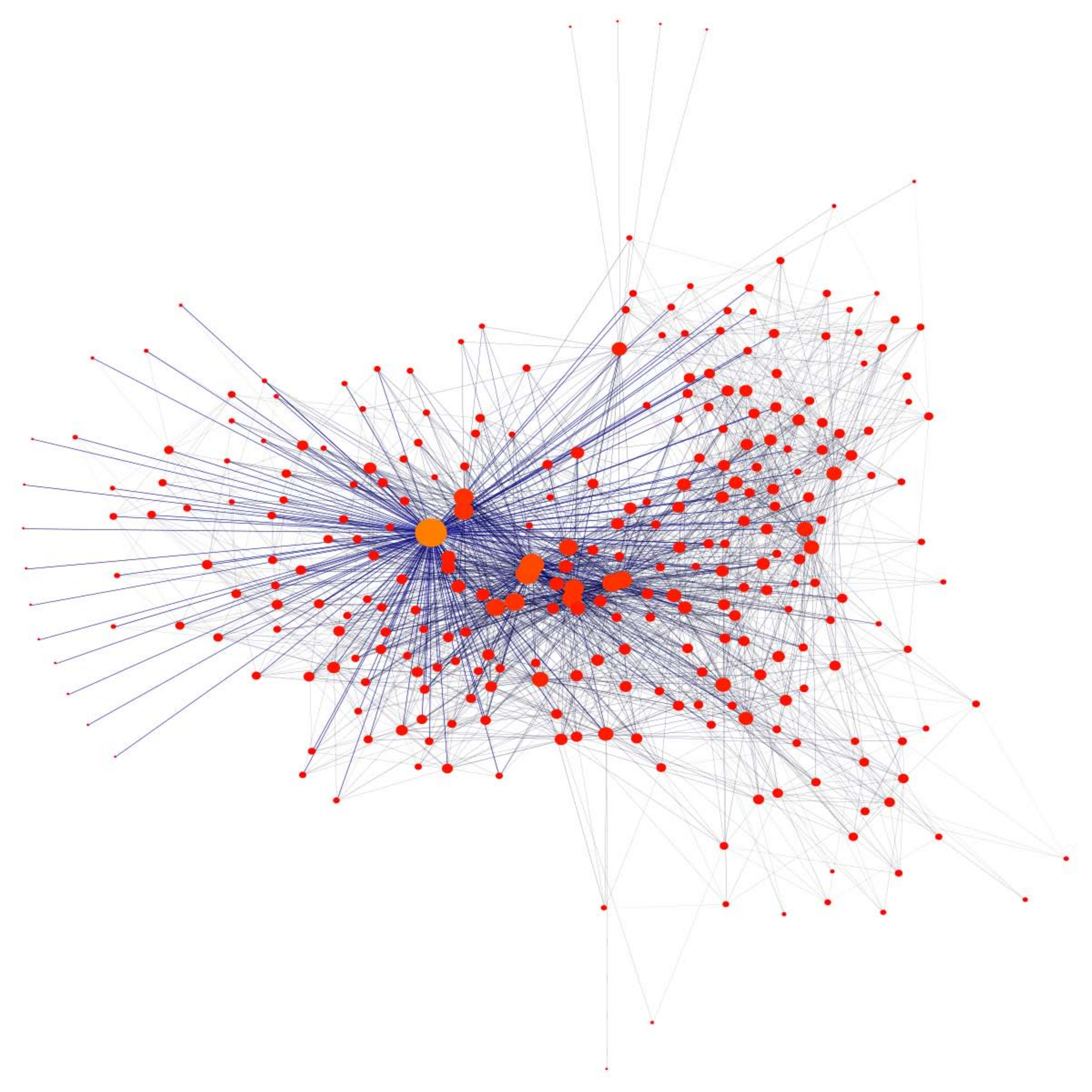}}
\scalebox{0.09}{\includegraphics{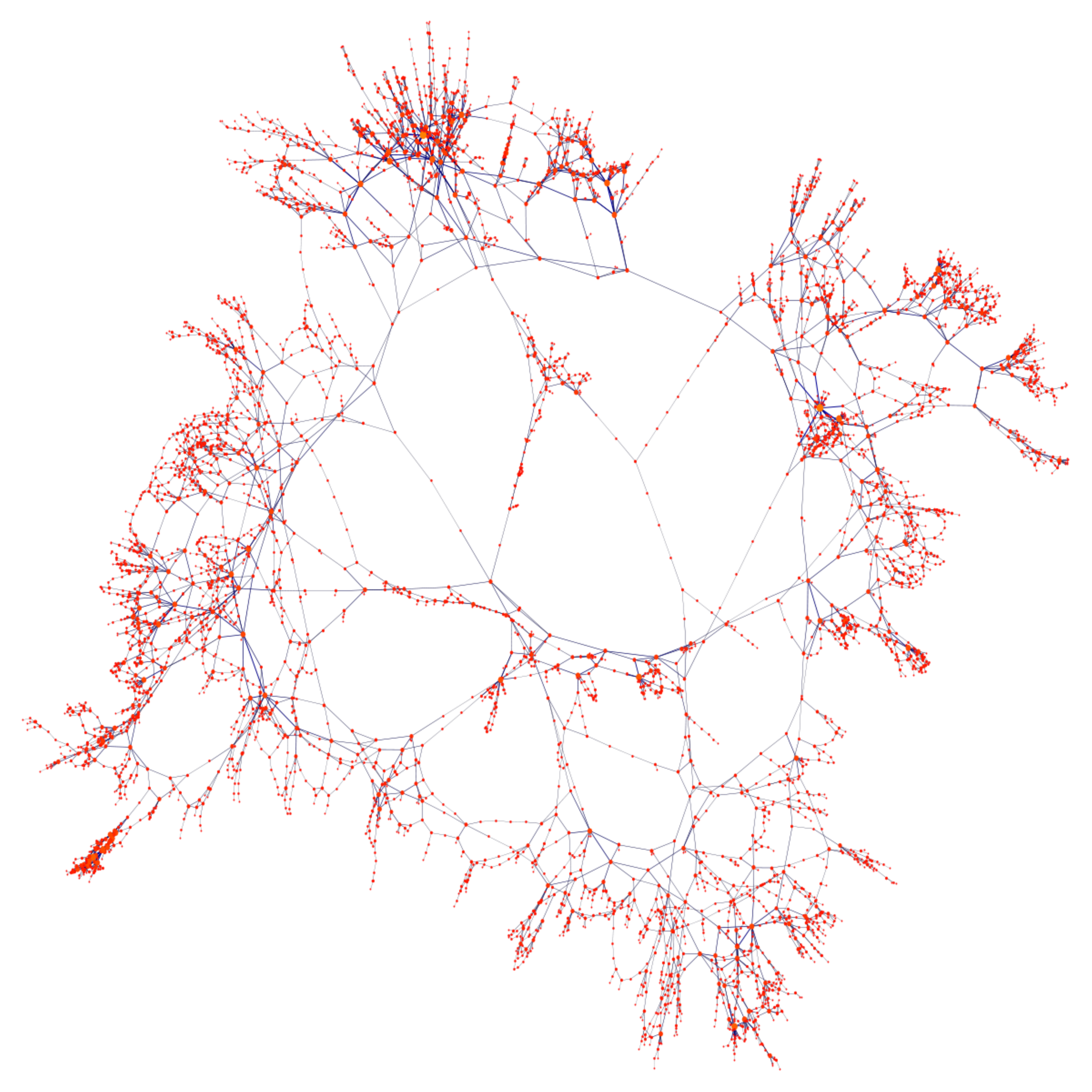}}
\caption{
Examples of publicly available networks.
The first is the word adjacency graph of common adjectives and nouns
in "David Copperfield" \cite{Newman2006},
the second is a neural network of Caenorhabditis Elegans 
\cite{WattsStrogatz}, the third graph shows 
the Western states power grid \cite{WattsStrogatz}. We have
$\lambda({\rm adjnoun}) = 1.36915$, 
$\lambda({\rm celegans}) = 1.43514$ and
$\lambda({\rm powergrid}) = 8.35961$, 
}
\end{figure}

\begin{figure}[H]
\scalebox{0.09}{\includegraphics{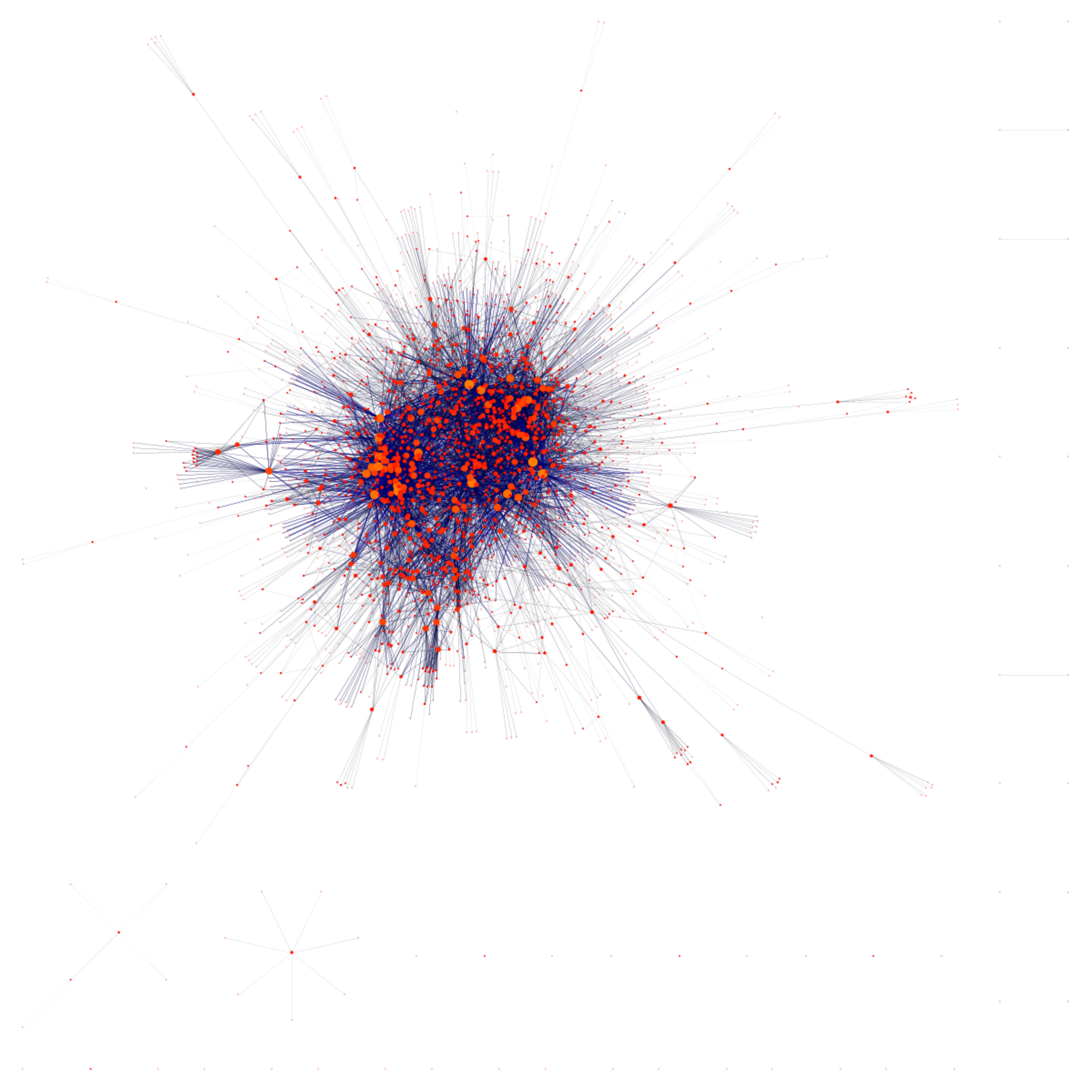}}
\scalebox{0.09}{\includegraphics{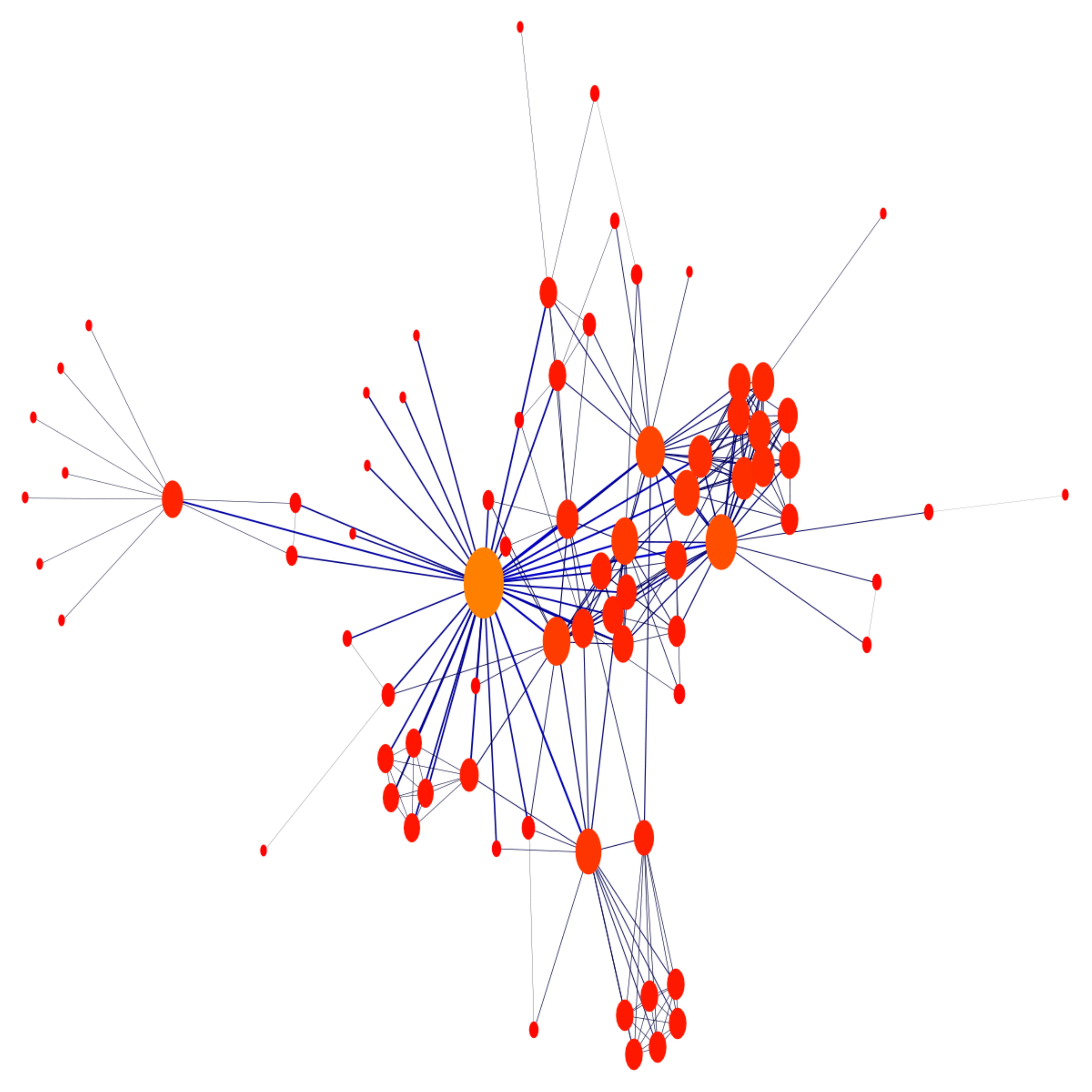}}
\scalebox{0.09}{\includegraphics{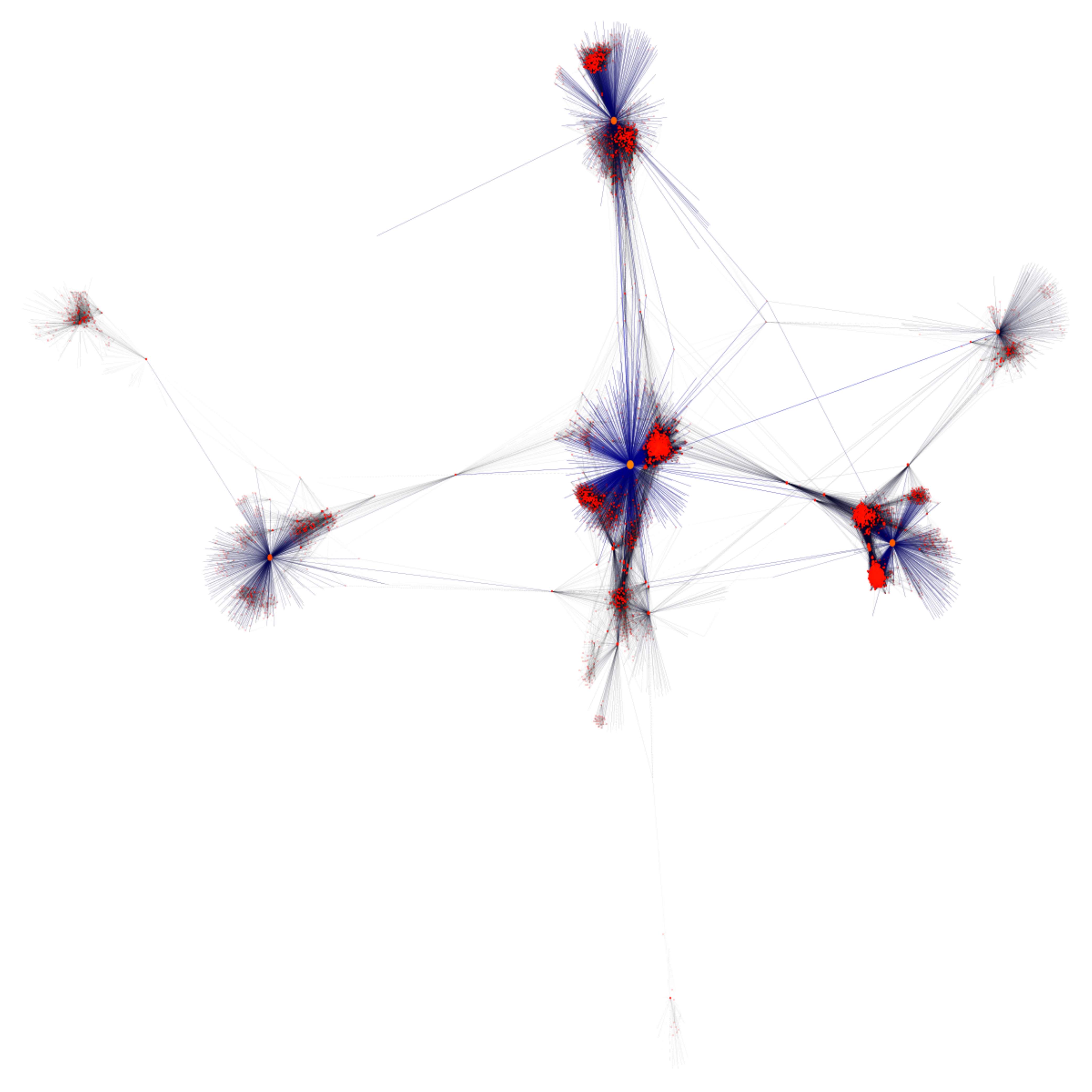}}
\caption{
The yeast protein-protein network \cite{WattsStrogatz},
the characters in the novel Les Miserables \cite{KnuthGraphBase,Gephi}
and finally part of facebook from the Stanford database. 
We measure the length-cluster coefficients:
$\lambda({\rm yeast}) = 1.82013$,
$\lambda({\rm miserables}) = 3.79865$ and
$\lambda({\rm facebook}) = 5.63298$,
}
\end{figure}

\begin{figure}
\scalebox{0.08}{\includegraphics{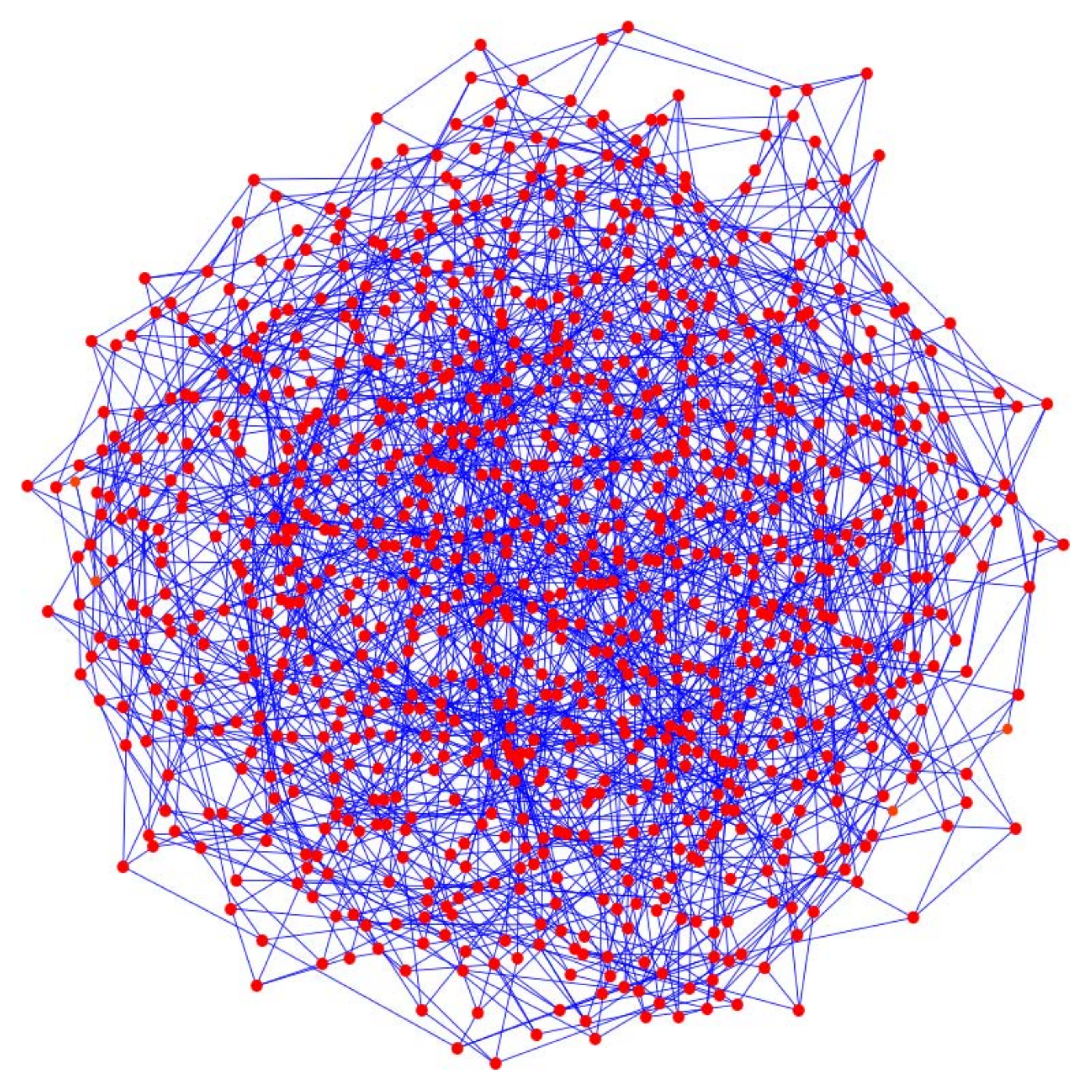}}
\scalebox{0.08}{\includegraphics{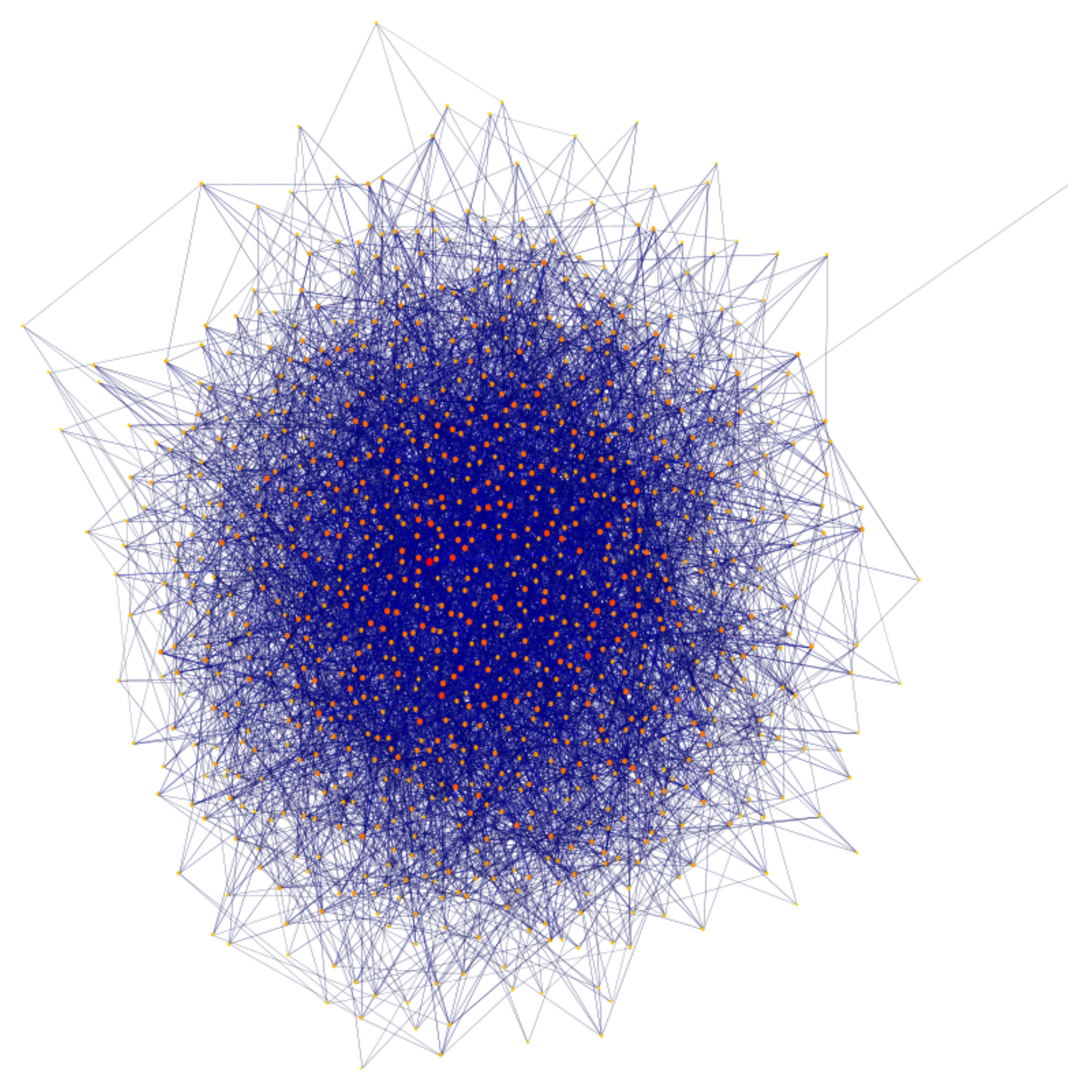}}
\scalebox{0.08}{\includegraphics{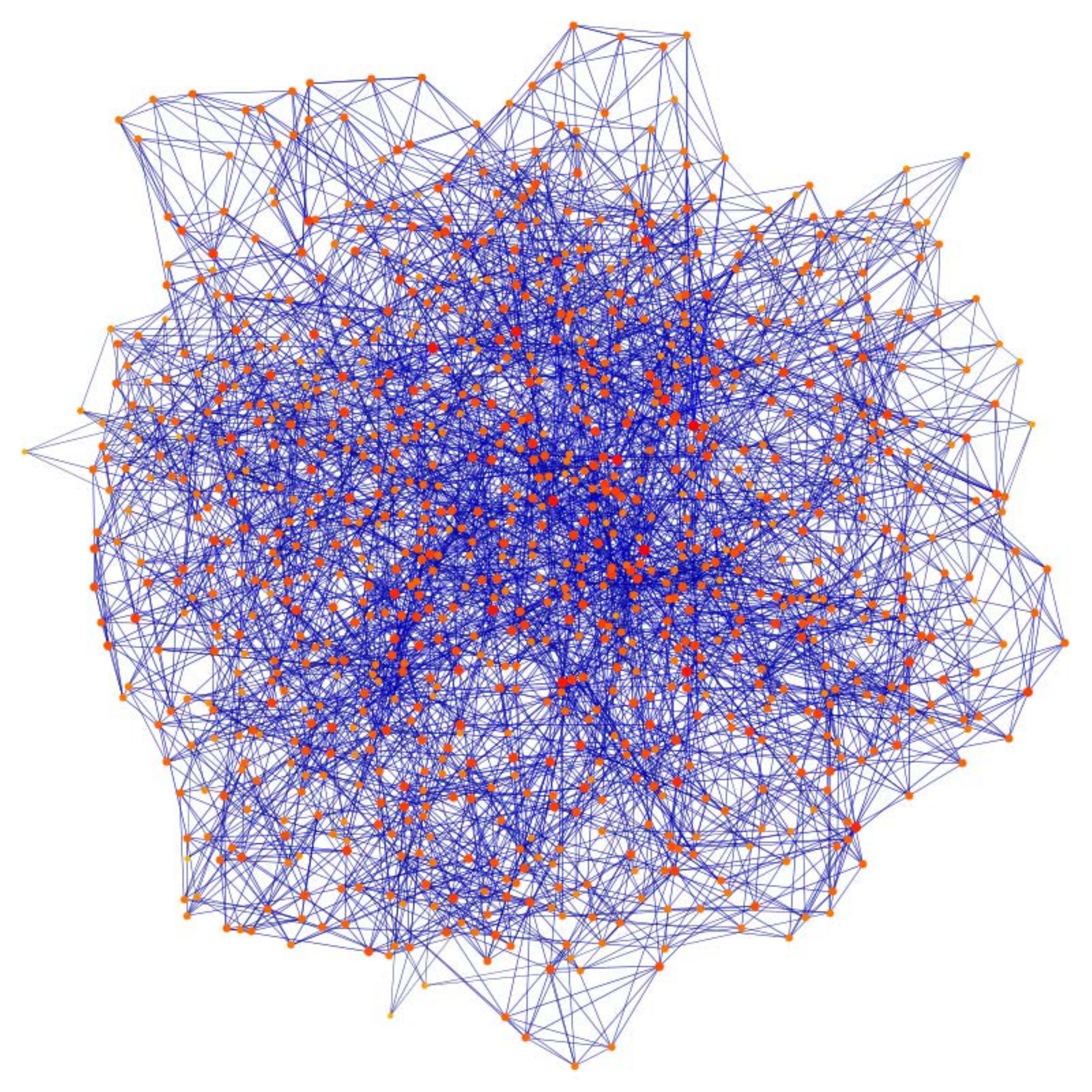}}
\scalebox{0.08}{\includegraphics{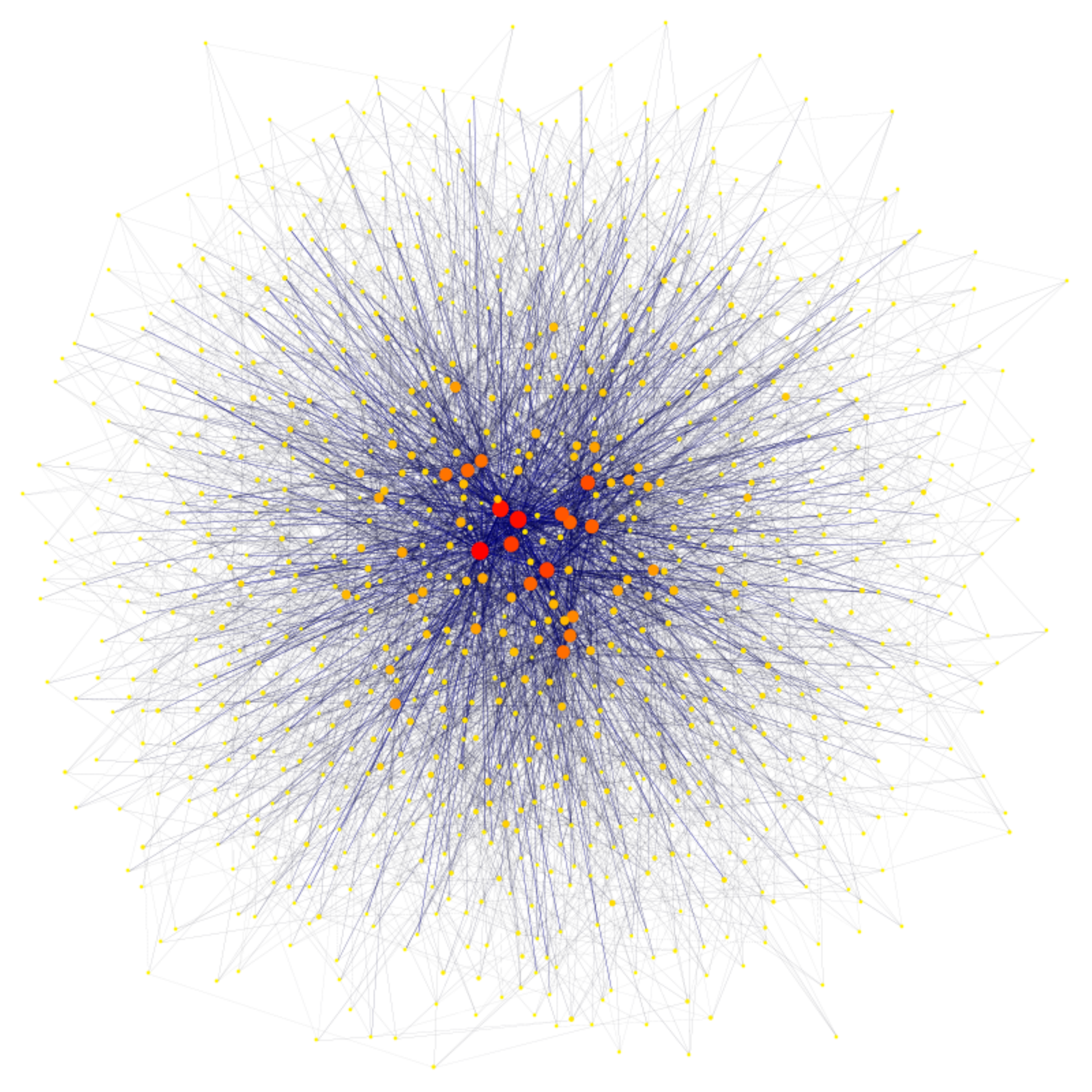}}
\caption{
Examples of random graphs with 1001 vertices. Their shape is in general more amorphic.
The first is a {\bf random permutation graph} $T,S$ on $Z_{1001}$ instead of arithmetic maps. 
Its average degree is very close to $4$, the characteristic path length is $5.6$, the global 
cluster coefficient $0.001$ and the length cluster coefficient is $0.81$.
The second figure shows a {\bf random Erdoes-Renyi graph} \cite{erdoesrenyi59} with 1001 vertices.
It has average degree 9.8, characteristic path length 3.3, global cluster coefficient
$0.0078$ and $\lambda=0.67$.
The third figure is a {\bf Watts-Strogatz graph}  \cite{WattsStrogatz} with 1001 nodes with an $8$-regular graph,
rewiring probability of $0.2$, characteristic path length $4.3$, global cluster
coefficient $0.32$ and $\lambda=3.9$. 
The fourth figure shows a {\bf random Barabasi-Albert graph} \cite{BarabasiAlbert}  with $k=4$. 
The average degree is close to $8$, the characteristic path length $3.2$ and the
global clustering coefficient $0.025$ and $\lambda=0.866$. }
\end{figure}

\section{Length-Cluster coefficient} 

Various graph quantities can be measured when studying graphs.
Even so we have a deterministic construction, we can look at graph 
quantities as {\bf random variables} over a probability space
of a class of generators $T$. Examples are the global degree average $2|E|/|V|$,
the Euler characteristic $\chi(G) = \sum_{k=0} (-1)^k c_k$, 
where $c_k$ is the number of $K_{k+1}$
subgraphs of $G$, the inductive dimension $\dim(G)$ \cite{elemente11},
the number of cliques $v_k(G)$ of dimension $k$ in $G$, the average degree $2c_1/c_0 = 2|E|/|V|$, 
the Betti numbers $b_k(G)$, the diameter ${\rm diam}(G)$, the characteristic path length $L$, 
the global clustering coefficient $C$ which is the average occupancy density in spheres
\cite{WattsStrogatz} or the variance of the degree distribution or
curvature $K(x) = \sum_{k=0}^{\infty} (-1)^k V_{k-1}(x)$, where 
$V_i(x)$ is the number of subgraphs $K_{k+1}$ in the unit sphere $S(x)$ of a vertex.  \\

\begin{figure}
\scalebox{0.12}{\includegraphics{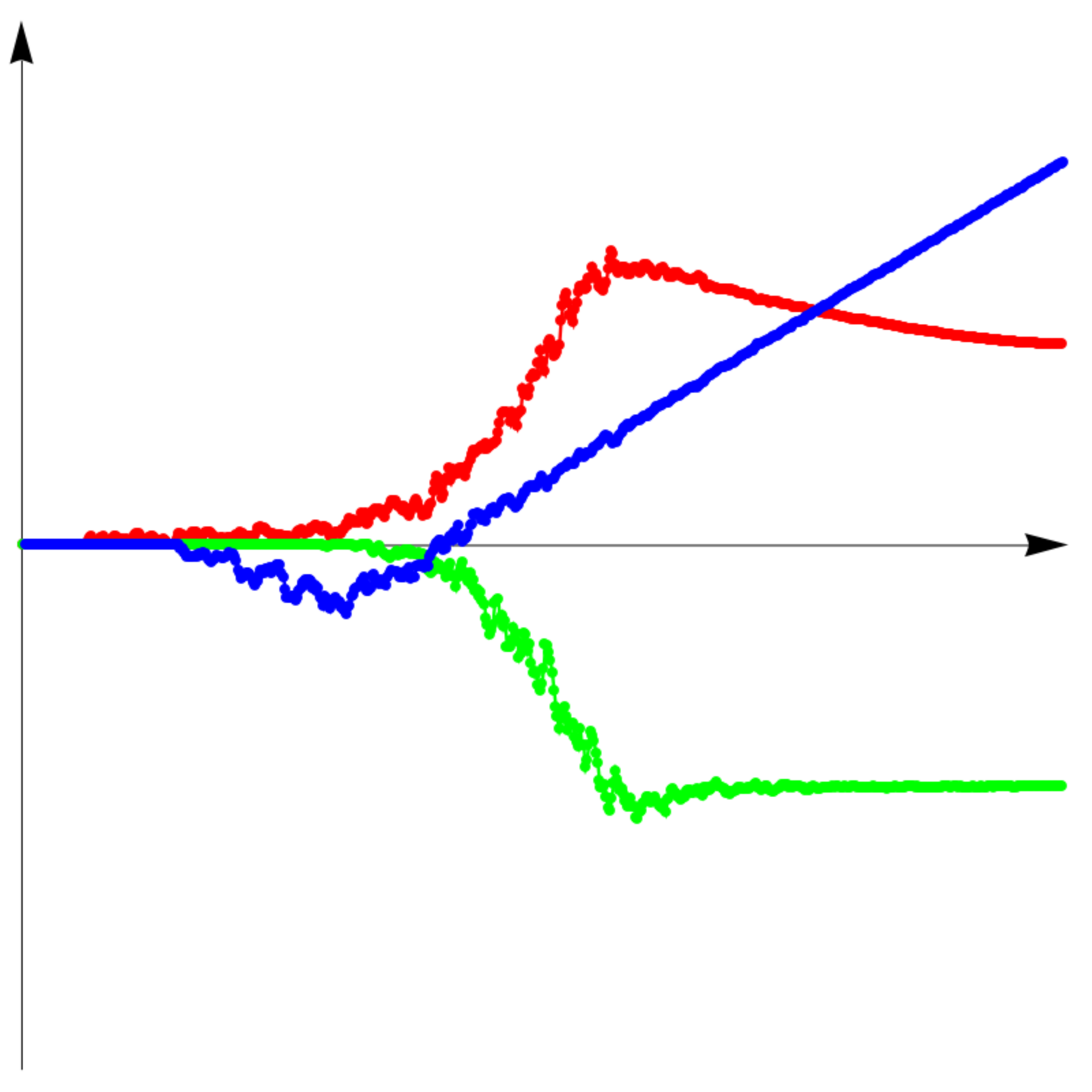}}
\scalebox{0.12}{\includegraphics{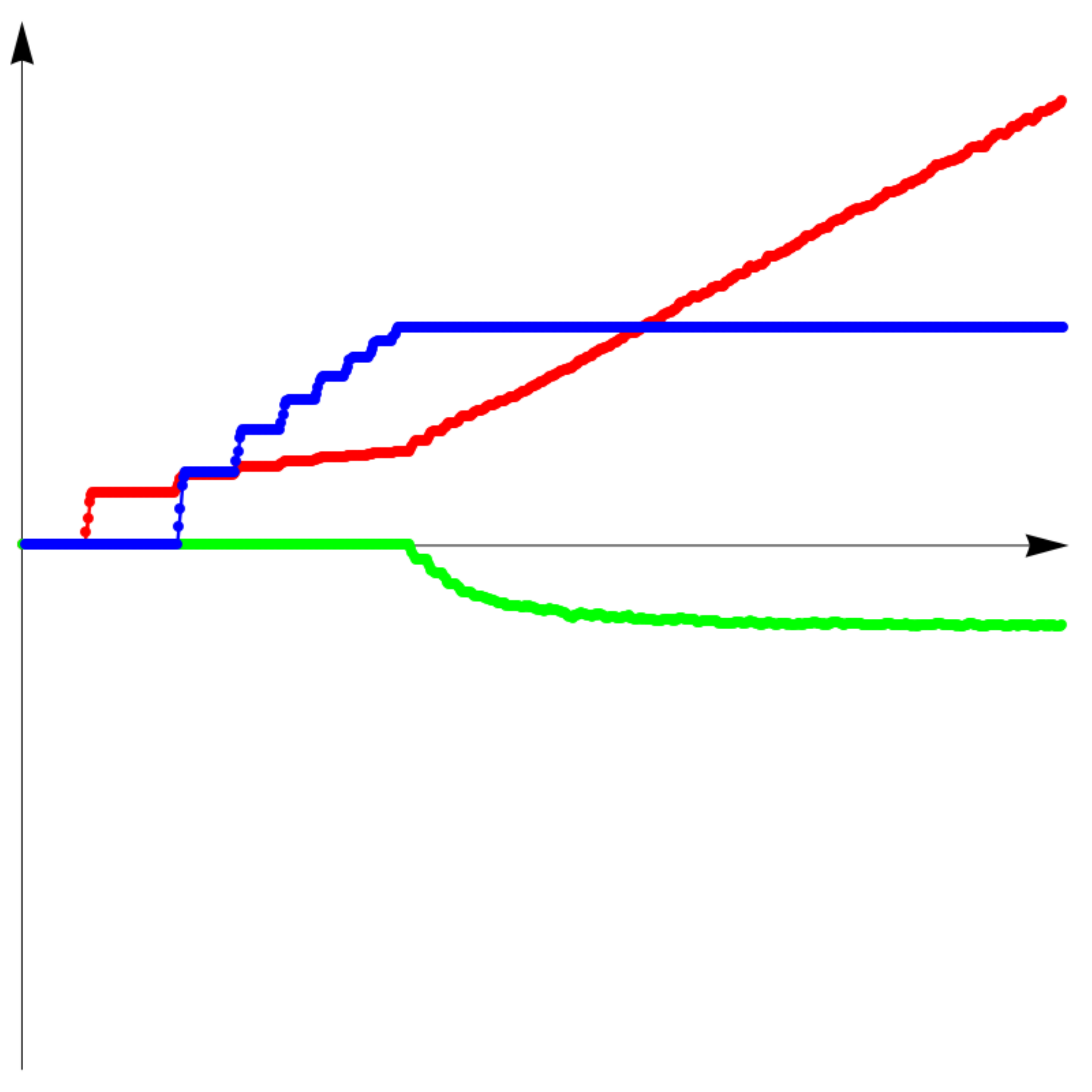}}
\scalebox{0.12}{\includegraphics{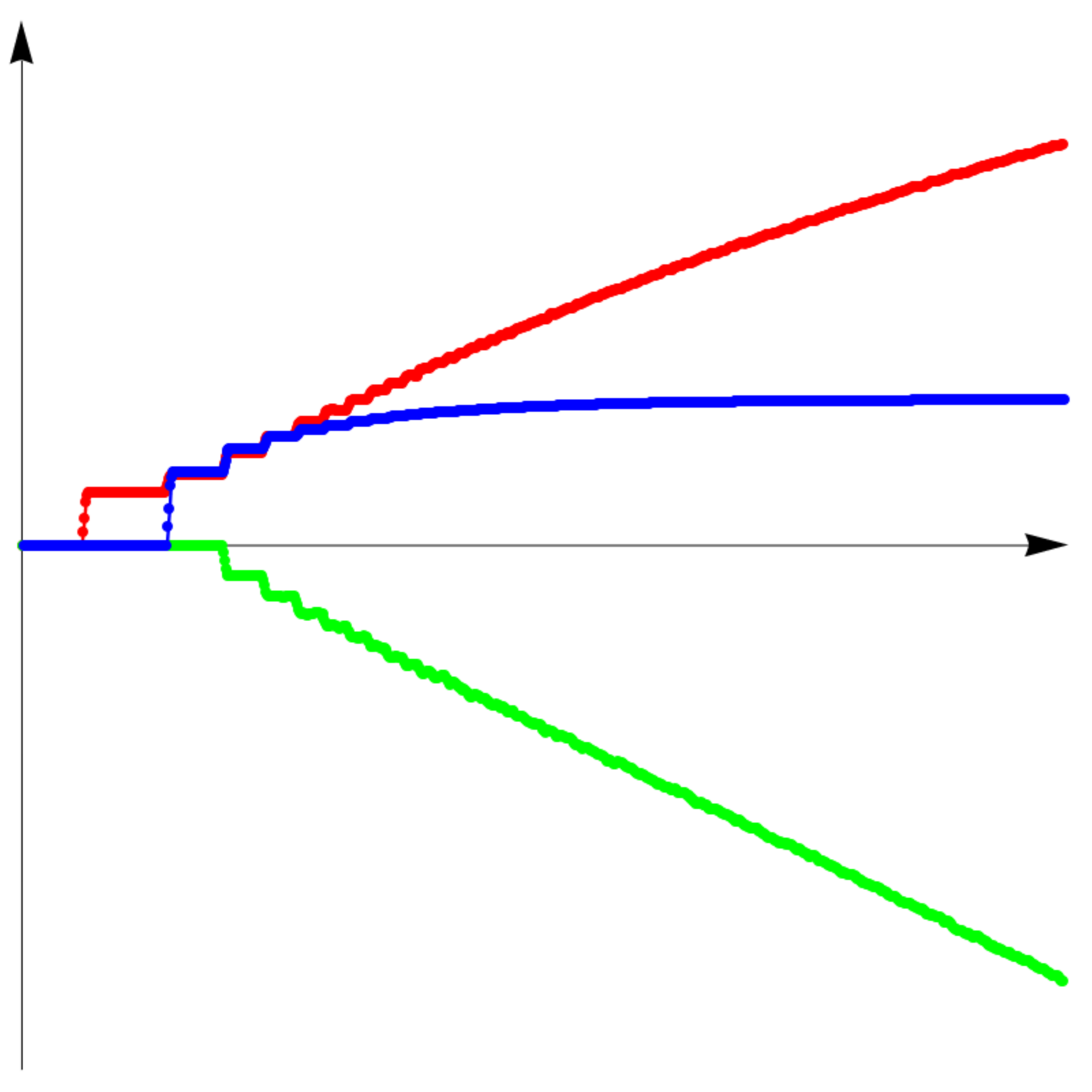}}
\scalebox{0.12}{\includegraphics{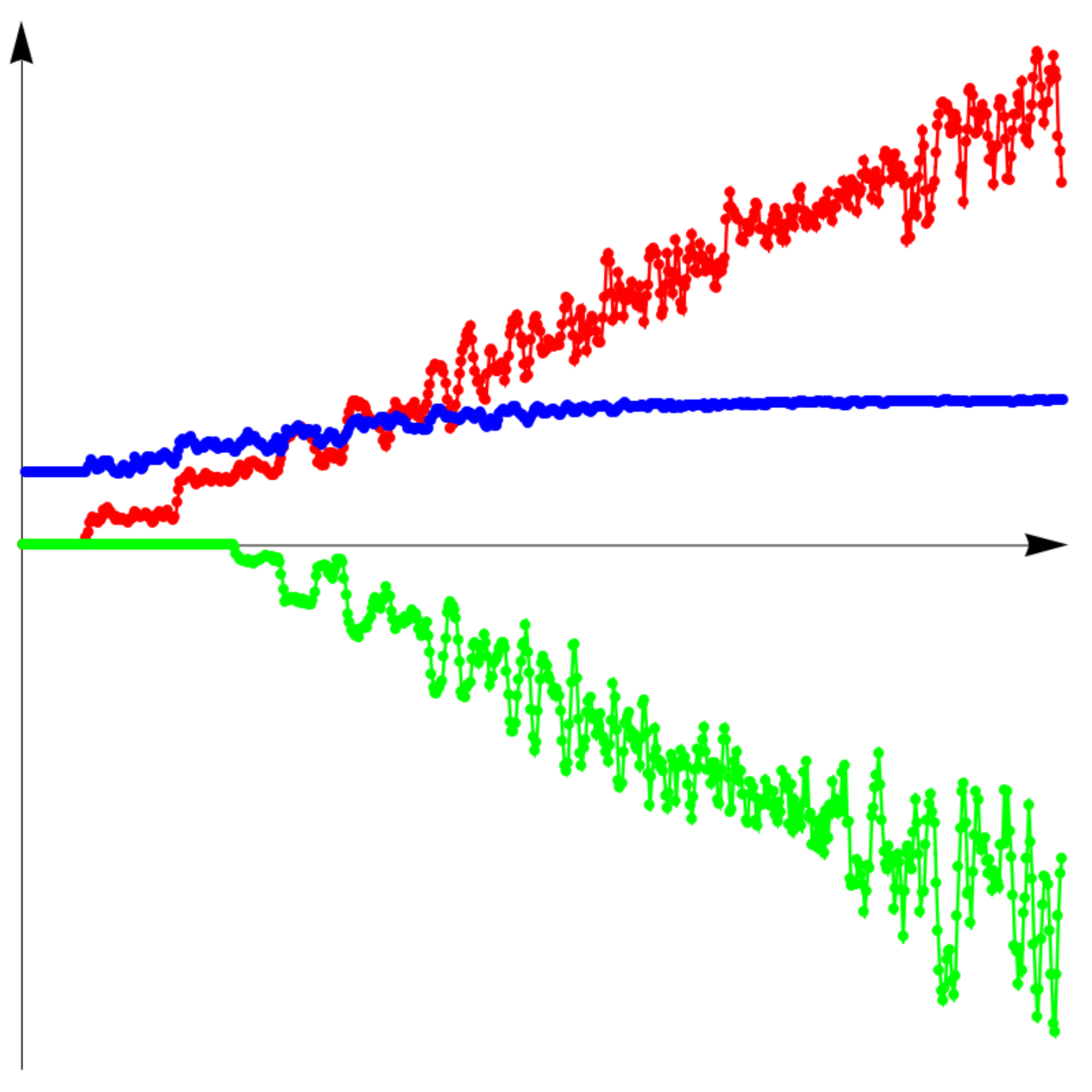}}
\scalebox{0.12}{\includegraphics{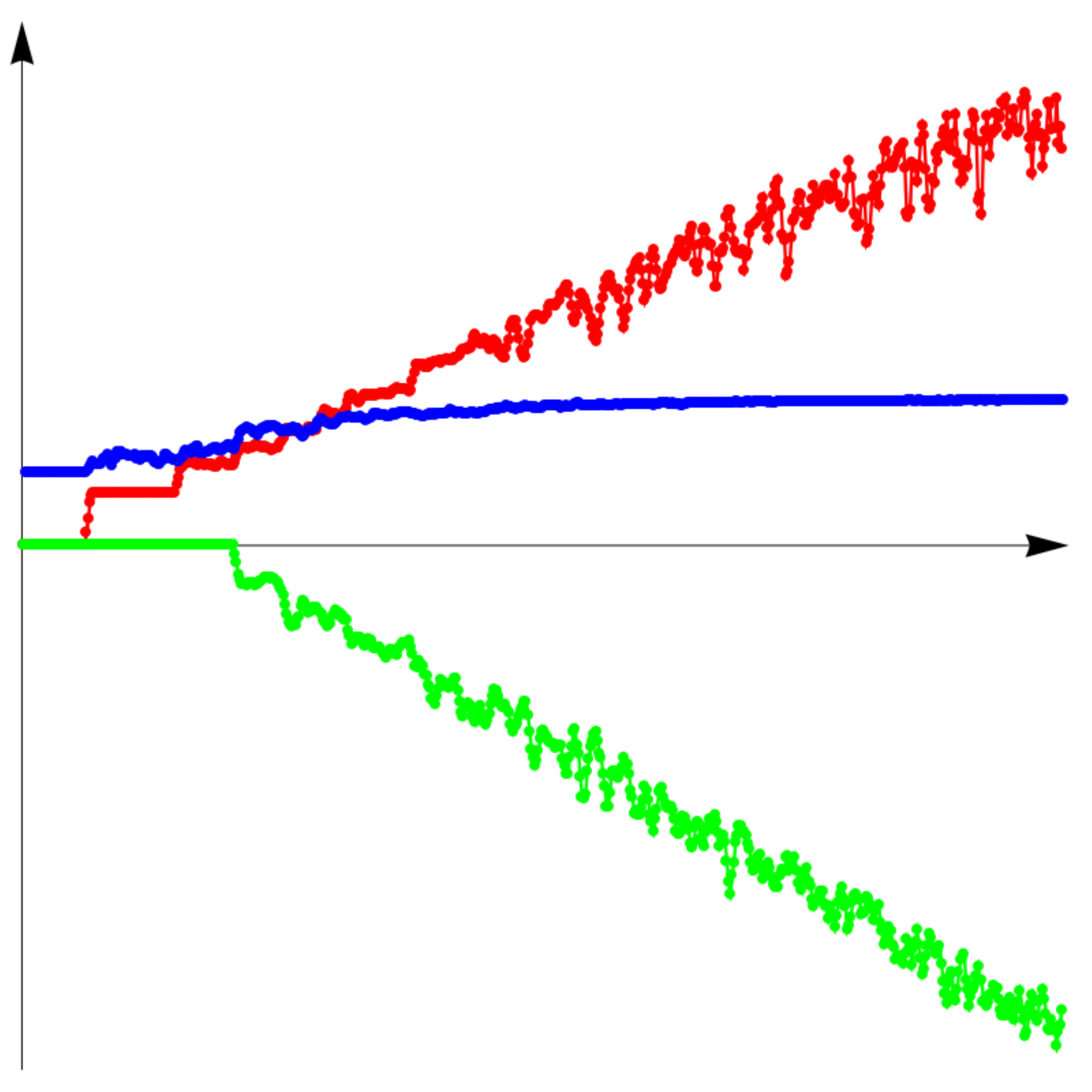}}
\scalebox{0.12}{\includegraphics{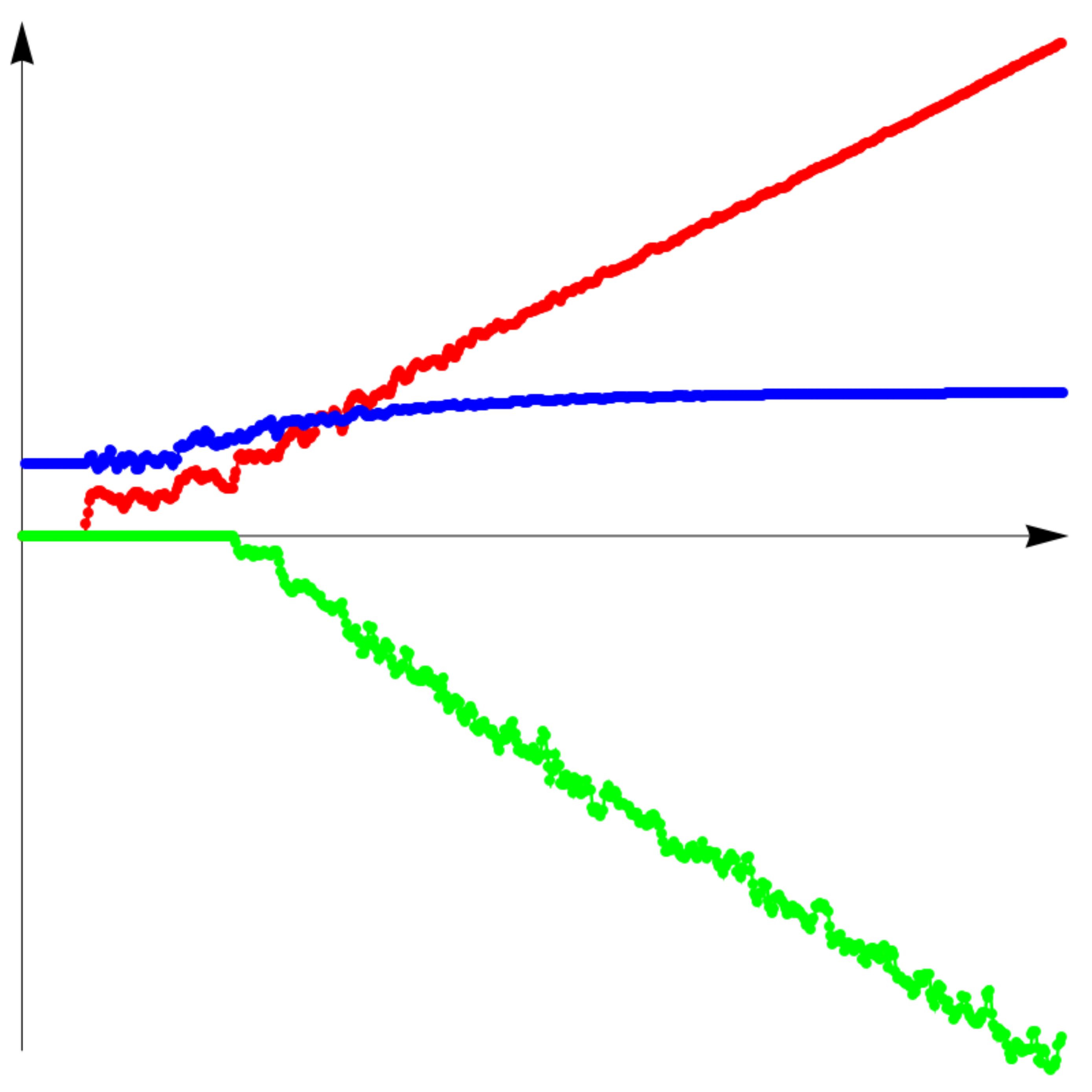}}
\caption{
The characteristic path length $\mu$ (red), the log of the cluster coefficient $\log(\nu)$ (green)
and the average vertex degree $\delta$ (blue) are shown together as a function of $n$ with logarithmic
scale in $n$; logarithmic growth therefore is shown linearly. First for Erdoes Renyi (where each edge is 
turned on with probability $p=0.1$), then for Watts-Strogatz for $k=4, p=0.1$ and then for 
Barabasi-Albert. In the second row we see first the case of two quadratic maps, then two exponential 
maps, and finally two random permutations.}
\end{figure}

An interesting quantity to study in our context is the {\bf length-clustering coefficient} 
\begin{equation}
 \lambda = \E[-\frac{\mu}{\log(\nu)}]   \; ,  \label{lengthcluster}
\end{equation}
where $\mu$ is the average characteristic length and $\nu$ the mean cluster coefficient. 
We are not aware that it has been studied already.  \\

The largest accumulation point limsup and the minimal accumulation point liminf can be called
the {\bf upper and lower Length-Cluster coefficient}. 
For most dynamical graphs, we see that the liminf and limsup exists for $n \to \infty$ and that 
the limit exists along primes. It exists also for Erdoes-Renyi graphs, where $\mu$ and $\nu$ have limits 
themselves also for fixed degree random graphs, where we see $\lambda$ to converge. The limit is 
infinite for Watts-Strogatz networks in the limit $n \to \infty$ due to large
clustering and zero for Barabasi-Albert networks in the limit $n \to \infty$. 
For graphs defined by random permutations, we see $\lambda$ converges to $1$.  \\

It has become custom in the literature to use a logarithmic scale in $n$ so that
one gets linear dependence of $\mu$ and $\log(\nu)$. Despite the fact that both clustering coefficient
and characteristic length are widely used, the relation between these two seems not have been 
considered already. While the clustering coefficient at a vertex is a local property which often
can be accessed, the characteristic length is harder to study theoretically. For random graphs
we measure clear convergence of $\lambda$ in the limit: \\

In the first case, the probability space is the set of all pairs of permutations on $Z_n$.
It is a set with $(n!)^2$ elements. Here are some questions. \\

\begin{center} \fbox{\parbox{12cm}{
{\bf Length-Cluster convergence conjecture  I:} for random permutation graphs with 2 or more
generators, the length-clustering coefficient (\ref{lengthcluster}) has a finite limit 
for $n \to \infty$. 
}} \end{center} 

In the next case, we take for $d=2$ the probability space $Z_n^2$ of all pairs $(a,b)$
leading to pairs of transformations $T(x)=x^2+a$, $S(x)=x^2+b$. Also for the following
question there is strong evidence: \\

\begin{center} \fbox{ \parbox{12cm}{ 
{\bf Length-Cluster convergence conjecture II:}
Graphs defined by $d \geq 2$ quadratic maps on $Z_p$ with prime $p$, 
the expectation of the limit $\lambda$ exists in the limit $p \to \infty$.
}} \end{center} 

\begin{figure}[H]
\scalebox{0.22}{\includegraphics{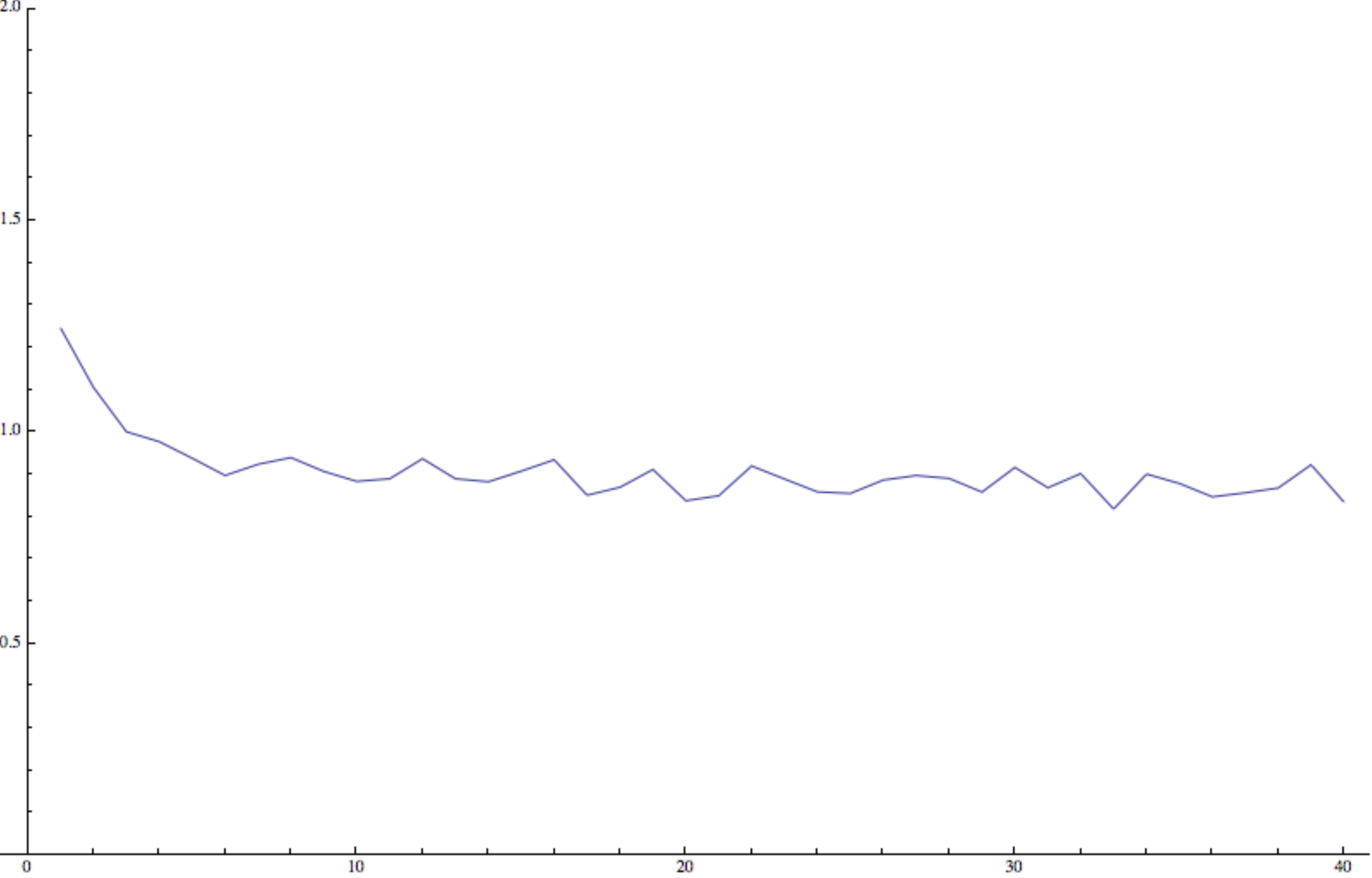}}
\caption{
This figure shows the expectation of the length-cluster coefficient for 
the graph generated by two random permutations $T,S$ on $Z_n$ as a function 
of $n$. For every $n$, we have computed the average of $500$ random permutations. 
Unlike for arithmetic maps, number theoretic considerations
play a lesser role for random permutations.}
\end{figure}

This remarkable relation between the 
global clustering coefficient $\nu$ which is the average of a local property
and the characteristic length $\mu$, which involves the average length of geodesics in the graph
and is not an average of local properties. 
Intuitively, such a relation is to be expected because a larger $\nu$ will allow for
shorter paths. If the limit exists then $\mu = -\lambda \log(\nu)$ which would be useful
to know because the characteristic length is difficult to compute while the clustering coefficient
$\nu$ is easier to determine. To allow an analogy from differential geometry, we can compare
the local clustering coefficient $\nu(x)$ with curvature, because a metric space large 
curvature has a small average distance between two points. \\

Relations between local properties of vertices and global characteristic length are not new.
In \cite{NewmanStrogatzWatts}, a heuristic estimate 
$mu=1+\log(n/d)/\log(d_2/d)$ is derived, where $d$ is the average degree
and $d_2$ the average $2$-nearest neighbors. Note that $d=2e/v$ so that $\log(n/d)$ can be
interpreted as the logarithm of the average edge density on $n$ vertices and that $\log(d_2/d)$
can be seen as a {\bf scalar curvature}. Unlike the Newman Strogatz Watts formula, which uses global
edge density we take the average of the edge density in spheres. To take an analogy of differential
geometry again, we could look at graphs with a given edge density and minimize the average path
length between two points. This can be seen as a path integral. We will look at the relation
of various functionals elsewhere.  \\

We have studied in \cite{GK2} graphs generated by finitely many maps 
$T_i(x) = [x^{\alpha_i} + a_i] \; {\rm mod} \; n$, 
where $a_i,n$ are positive integers and where $\alpha$ is a real parameter.
We see that for $\alpha$ larger than $1$ and not too close to $1$and $\alpha$ not
an integer, the graphs are essentially random, while for $\alpha<1$ or $\alpha$ close to $1$, 
there are geometric patterns. 
We are obviously interested in the $\alpha$ dependence. The reason for the interest
is that for $a_i=i$ we get Watts-Strogatz initial conditions for $p=0$. 
In \cite{GK2}, we especially took maps of the form 
$T_1(x) = x+1, T_i(x) = [x^{1+p} + i] \; {\rm mod} \; n, i \geq 2$
on $Z_n$, where $[x]$ denotes the floor function giving the largest integer smaller or 
equal to $x$. These are deterministic graphs which produce statistical properties as 
the Watts-Strogatz models. 

\begin{figure}
\scalebox{0.1}{\includegraphics{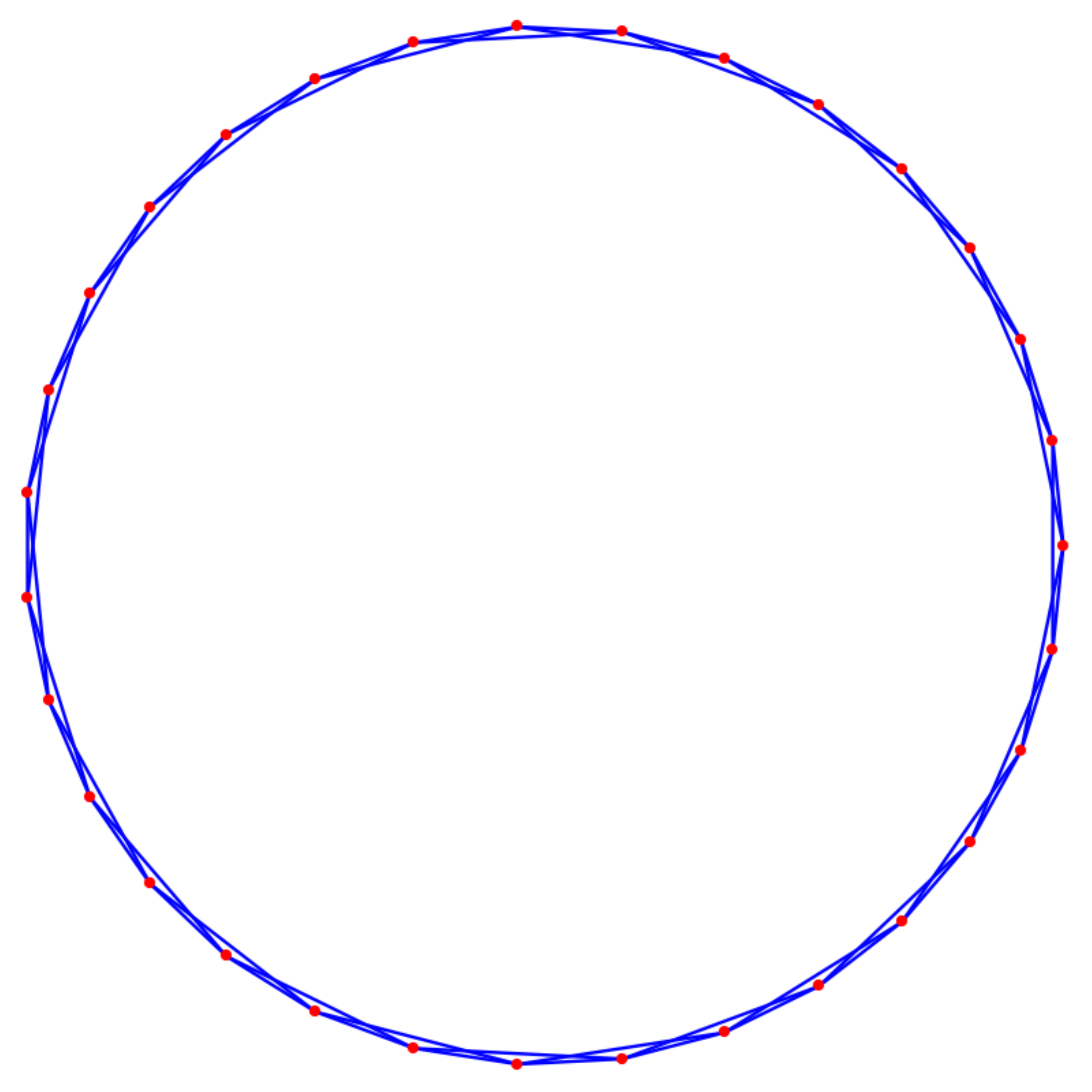}}
\scalebox{0.1}{\includegraphics{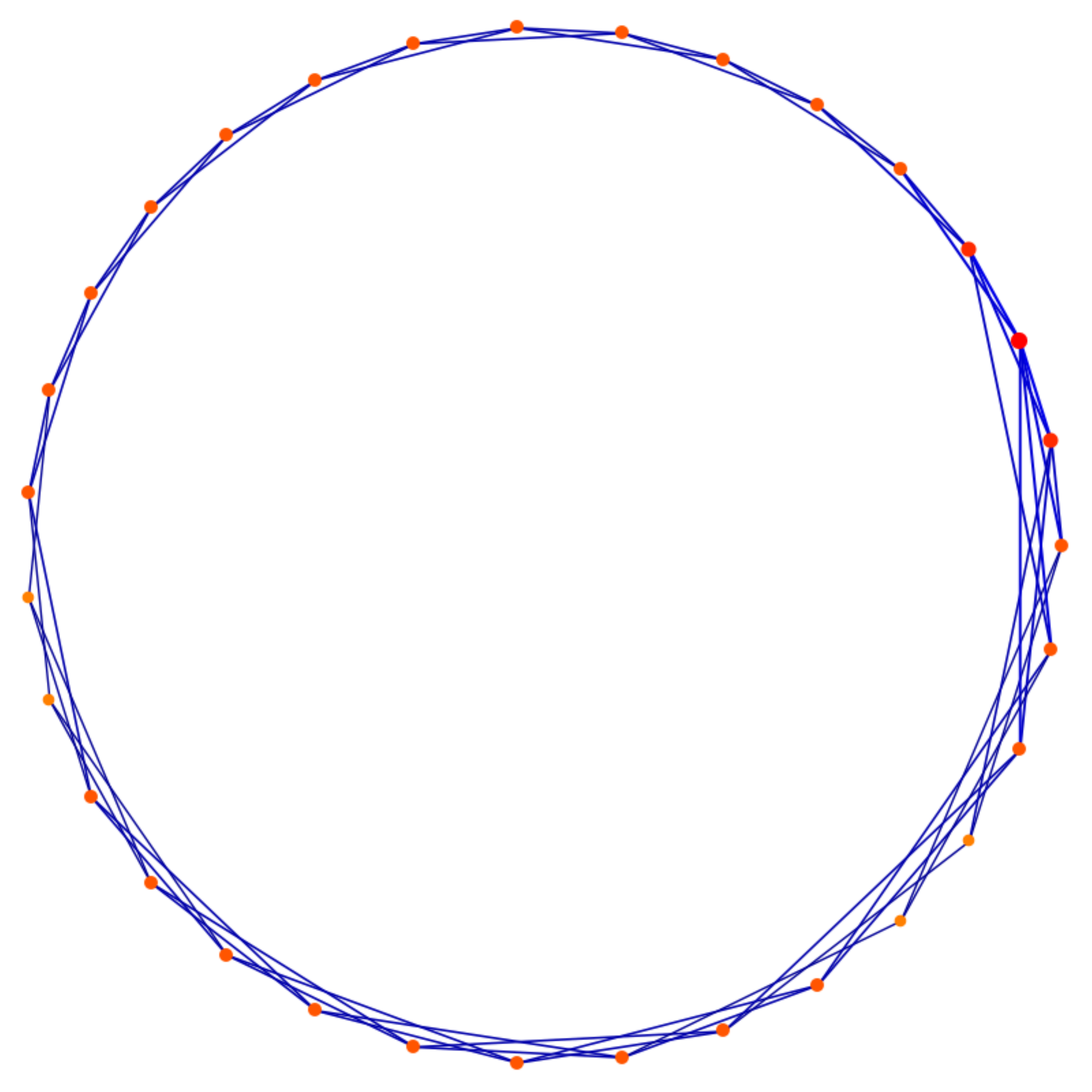}}
\scalebox{0.1}{\includegraphics{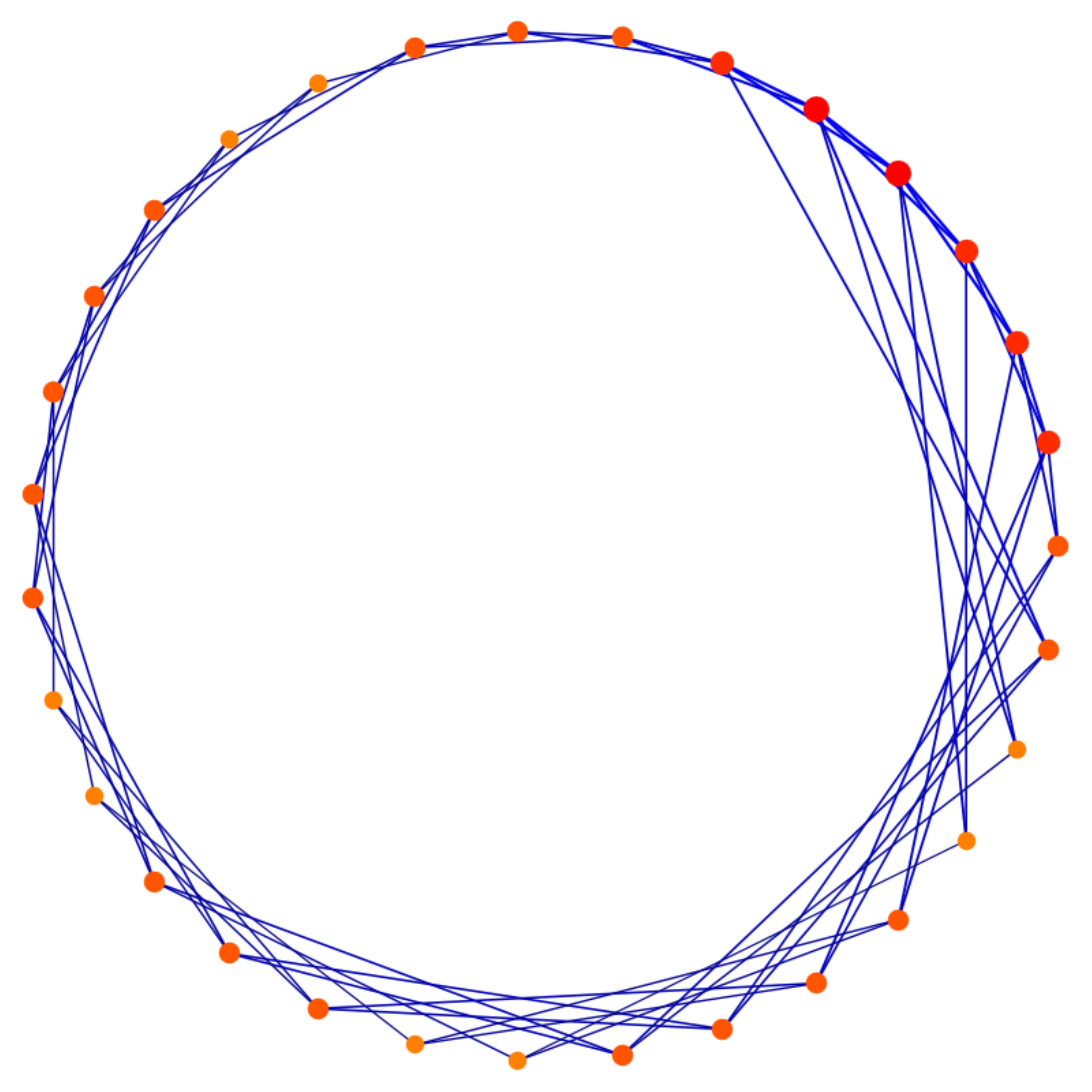}}
\scalebox{0.1}{\includegraphics{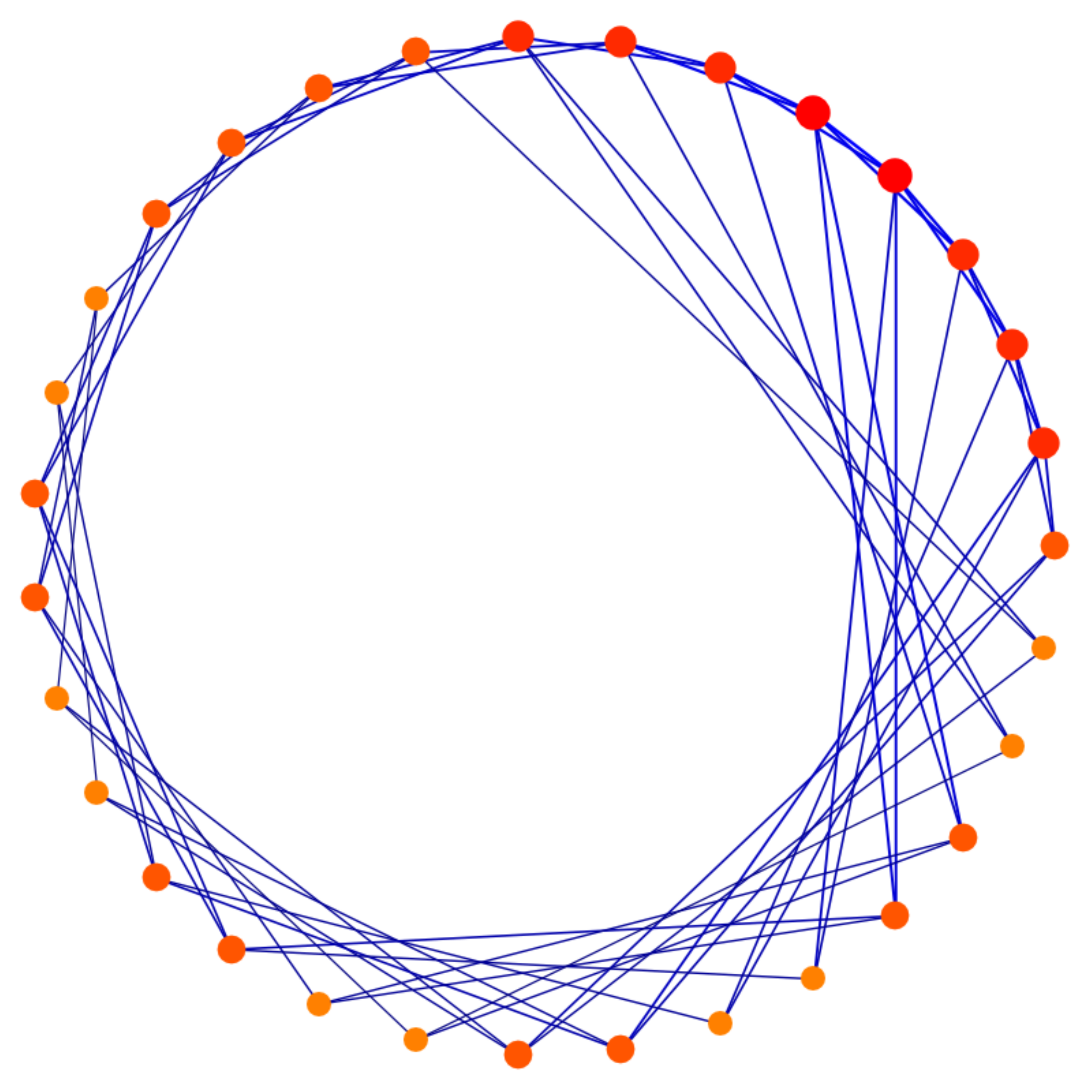}}
\caption{
The alpha graphs start like the Watts-Strogatz graphs. 
The rewiring is deterministic. See \cite{GK2}. 
}
\end{figure}

\section{Connectivity for quadratic orbital graphs}

We have looked already at special connectivity questions in \cite{KnillMiniatures}. 
One challenge for affine maps is to find necessary and sufficient
conditions that the map $T(x)=ax+b$ leads to a connected graph on $Z_n$.
An other {\bf network Mandelbrot challenge} is to that the graph on $Z_n$ 
generated by $T(x)=3x+1, S(x)=2x$ is connected for all $n$. We look at
other examples here, which are more of probabilistic nature. \\

Given a probability space of graphs, we can look at the
probability that a graph is connected. Here are three
challenges for quadratic maps along prime $n$: we see in one 
dimensions that connectivity gets rarer, in two dimensions that
it becomes more frequent and in three or higher dimensions that
connectivity is the rule. \\

In the following, we look at maps $T(x)=x^2+a$ generated on $Z_n$, 
where $n$ is prime. We denote by $C(n)$ the probability that the
graph is connected, where the probability space is is the set of $d$
different maps $T(x)=x^2+a$. Denote by $p_k$ the $k$'th prime. 

\begin{center} \fbox{\parbox{12cm}{
{\bf Quadratic graph connectivity A)}
With one quadratic map, the connectivity probability is
$$  C(p_k)  = O(p_k \frac{\log(k)}{k^2})   \; . $$
}} \end{center}

For random permutation graphs, we have $C(n)=1/n$ because there are
$(n-1)!$ cycles in a group of $n!$ transformations. Dixon's theorem
tells that the probability that two random transformations generate a 
transitive subgroup is $1-1/n+O(1/n^2)$ \cite{Cameron2011}.  \\

\begin{center} \fbox{\parbox{12cm}{
{\bf Quadratic graph connectivity B)}
If we have two quadratic maps, the connectivity probability is
$$ (1-C(p_k)) = O( (\frac{\log(p_k)}{k})^2) $$
}} \end{center}

\begin{center} \fbox{\parbox{12cm}{
{\bf Quadratic graph connectivity C)}
With three different quadratic maps on $Z_p$, then all graphs are connected. 
}} \end{center}

We have checked C) until prime $p=229$.

\section{Symmetries in arithmetic graphs}

If the maps under consideration preserve some symmetry, then also the graphs
can share this symmetry. We illustrate this with a simple example:

\begin{figure}
\scalebox{0.12}{\includegraphics{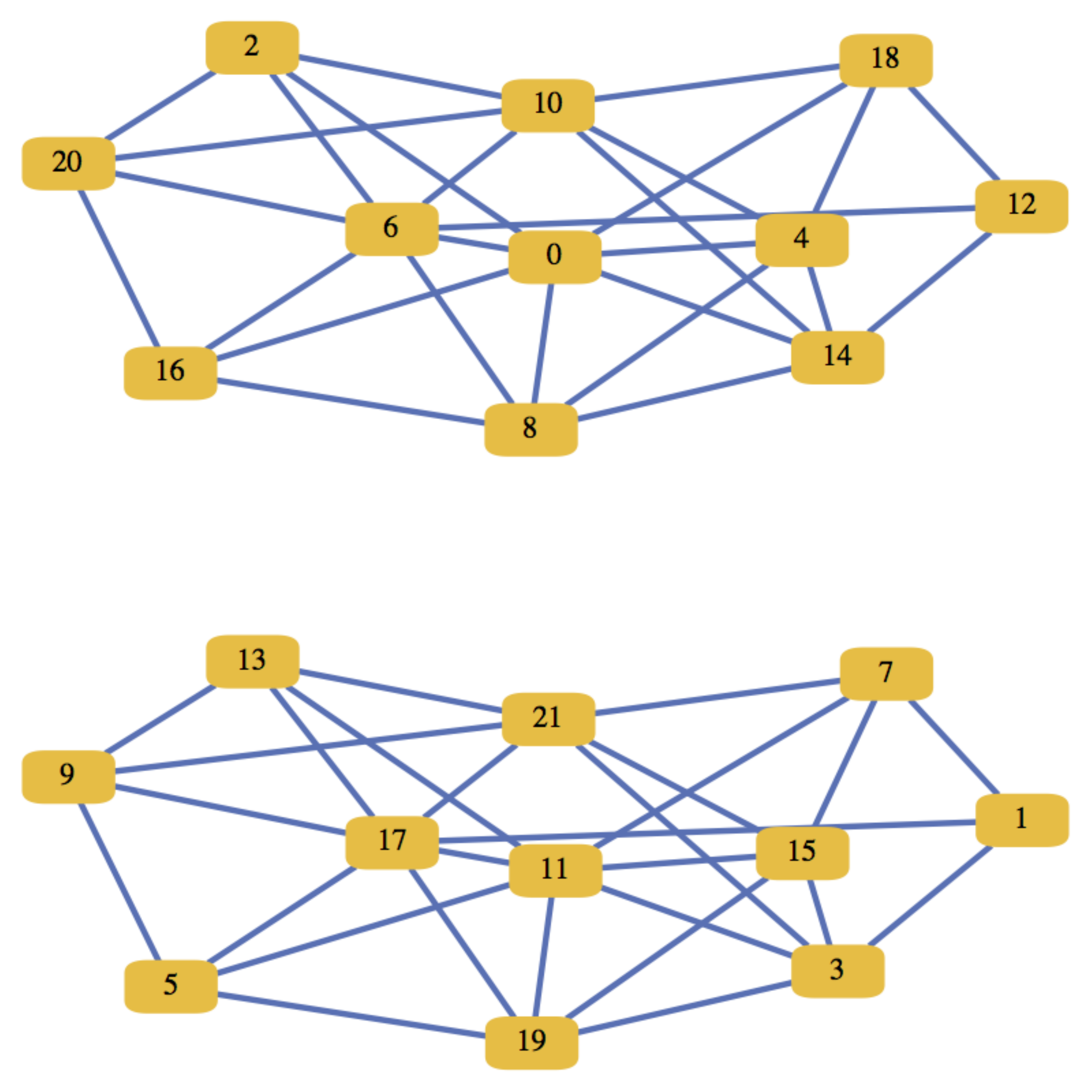}}
\scalebox{0.12}{\includegraphics{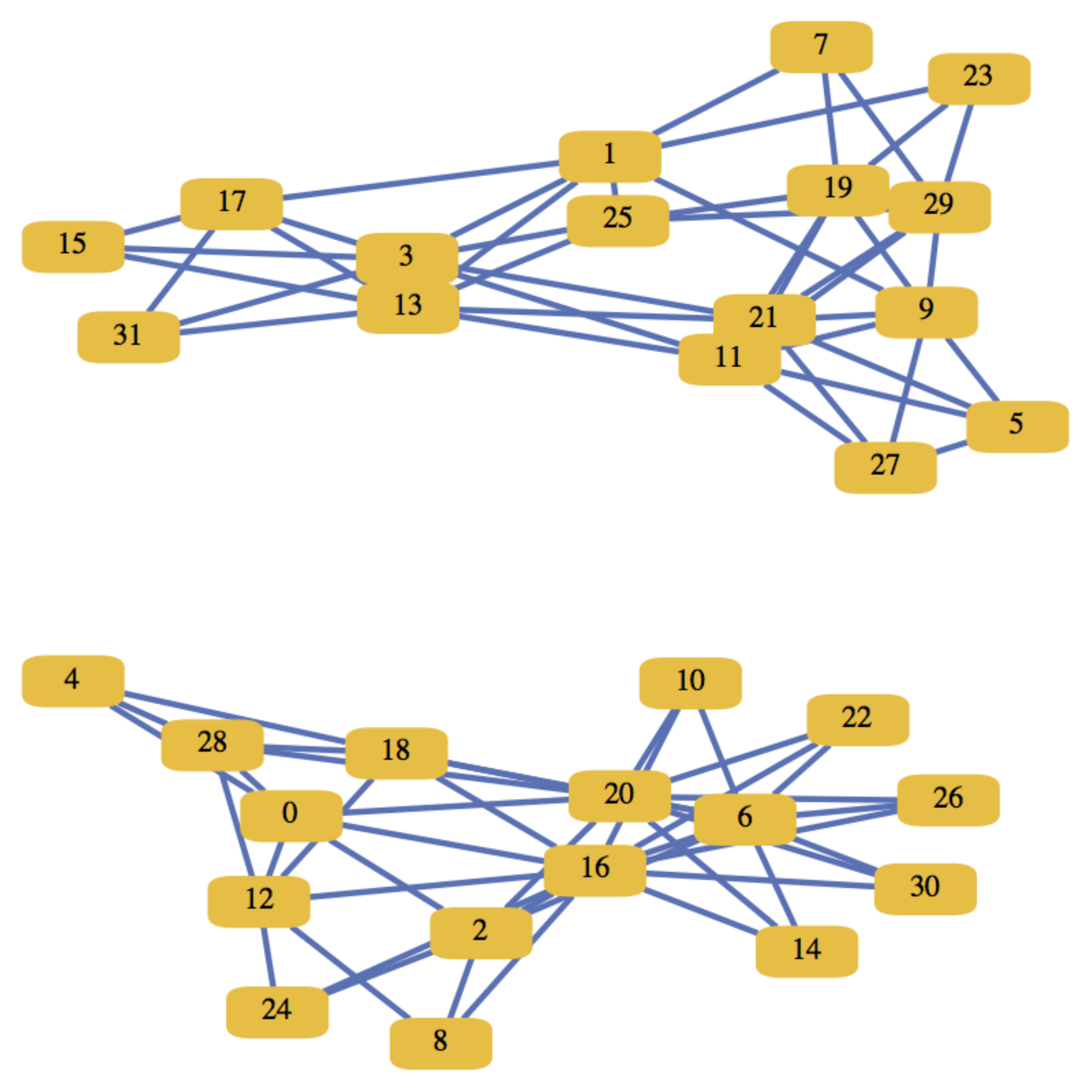}}
\scalebox{0.12}{\includegraphics{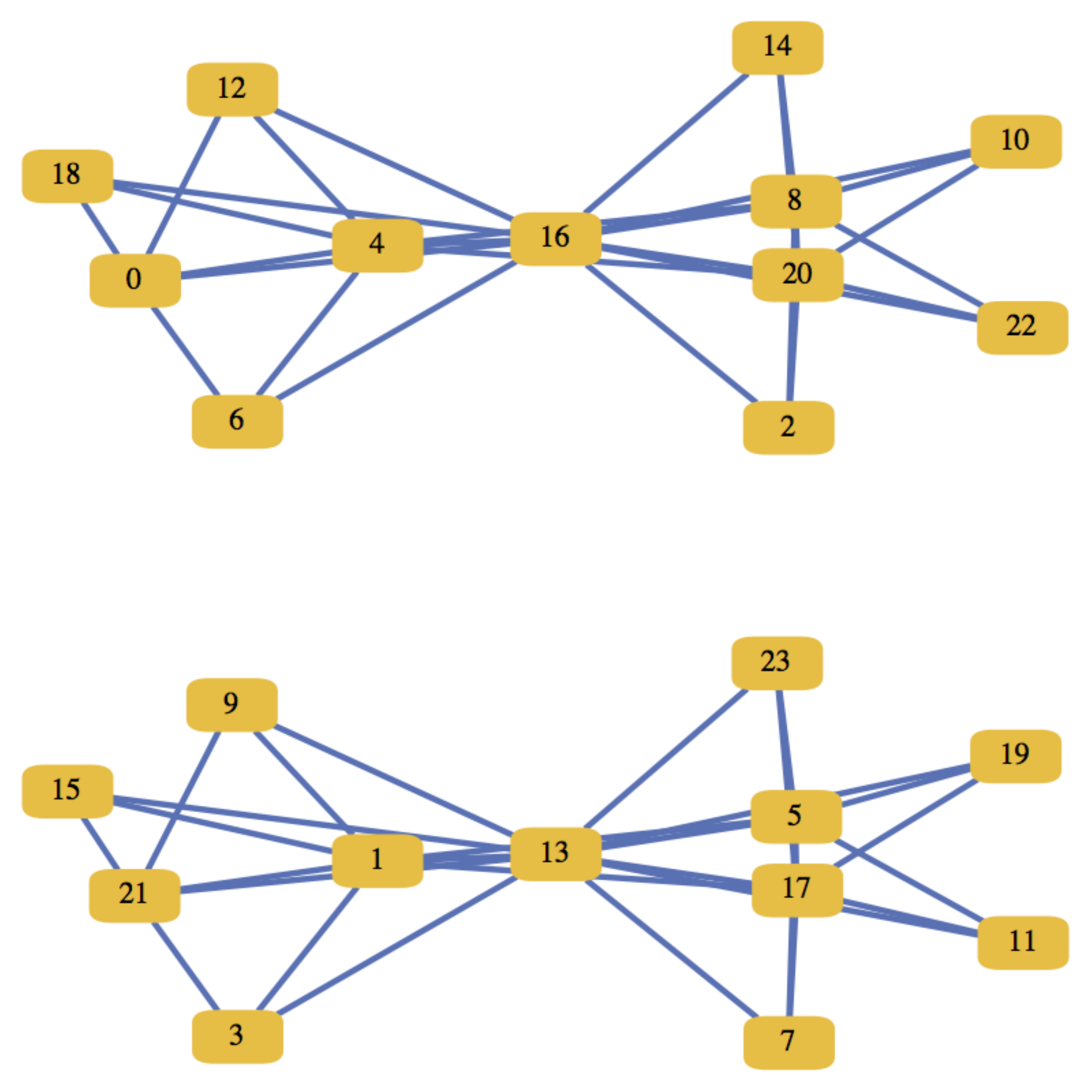}}
\caption{
The first figure shows the graph on $Z_{22}$ generated by
$f(x)=x^2+2,g(x)=x^2+6,h(x)=x^2+16$, where we have an isomorphism between
the two components. The second figure shows $n=32, f(x)=x^2+2,g(x)=x^2+12,h(x)=x^2+16$,
where we have no isomorphism. The third example shows
$n=24, f(x)=x^2+4,g(x)=x^2+12,h(x)=x^2+16$, one of the rare cases,
where we have an isomorphism, even so $n$ is a multiple of $8$. }
\end{figure}

\begin{figure}
\scalebox{0.22}{\includegraphics{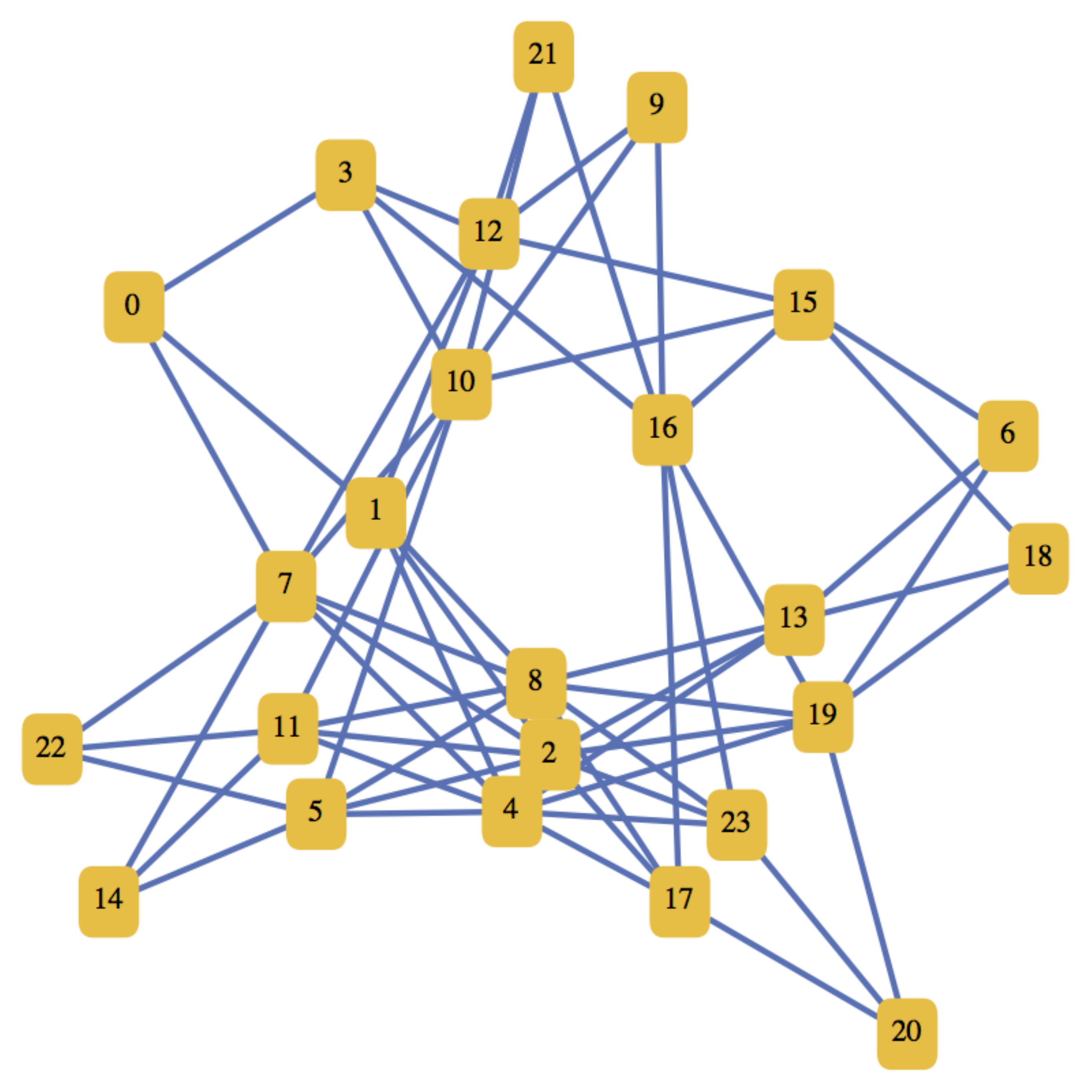}}
\scalebox{0.16}{\includegraphics{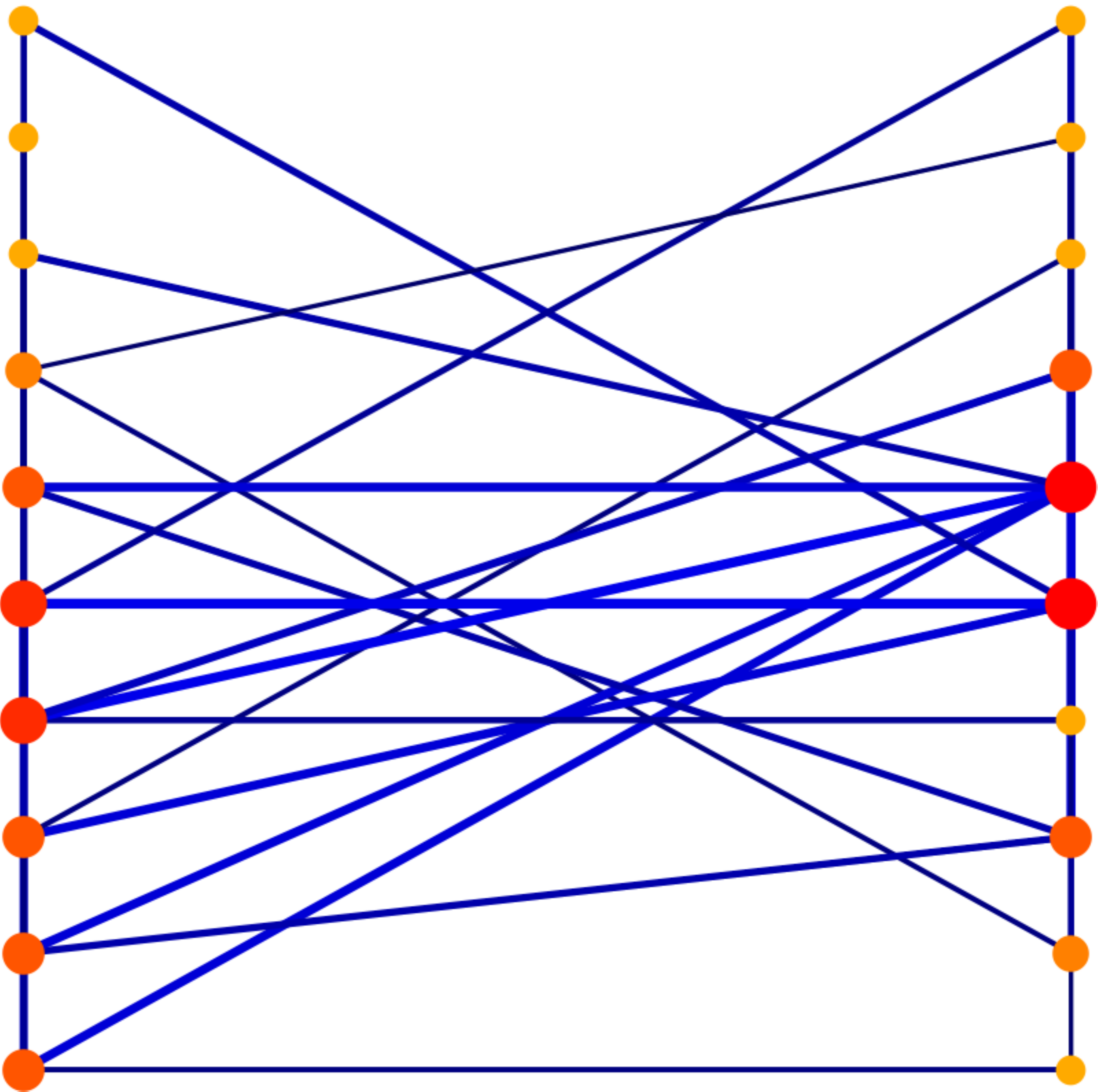}}
\caption{
The first figure shows the bipartite graph defined on $Z_{24}$ 
with  generators $x^2+1, x^2+3, x^2+7$.
The second graph shows it organized as a bipartite graph. 
}
\end{figure}

\begin{propo}[Miniature: symmetry]
Assume $n$ is even and $T_i(x) = x^2 + c_i$ on $Z_n$ and
$c_i$ are even, then the graph $G$ is the union of two disconnected graphs $G_1,G_2$. 
If $n$ is not a multiple of $8$, the two graphs $G_i$ are isomorphic. \\
\end{propo}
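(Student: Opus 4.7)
The plan has two parts: first show the decomposition $G = G_1 \sqcup G_2$, then construct an explicit isomorphism $G_1 \to G_2$ under the hypothesis $8 \nmid n$. The decomposition is a direct parity argument: $x^2 \equiv x \pmod 2$ and $c_i$ even give $T_i(x) = x^2 + c_i \equiv x \pmod 2$, so each $T_i$ preserves parity on $Z_n$ (parity being well defined since $n$ is even). Hence edges of $G$ only connect vertices of like parity, and $G$ splits into the induced subgraphs $G_1$ on the even vertices and $G_2$ on the odd ones.

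For the isomorphism I would try the most natural parity-reversing bijection, a shift $\phi(x) = x + k$ with $k$ odd. A direct expansion gives
\[
T_i(\phi(x)) - \phi(T_i(x)) = (x+k)^2 + c_i - (x^2 + c_i + k) = 2kx + k^2 - k,
\]
so the condition that $\phi$ intertwines each $T_i$ on the even vertices (writing $x = 2y$) becomes $4ky + k(k-1) \equiv 0 \pmod n$ for every $y$, equivalently
\[
4k \equiv 0 \pmod n \quad \text{and} \quad k^2 \equiv k \pmod n.
\]
Any odd $k$ satisfying these makes $\phi$ a graph isomorphism from $G_1$ to $G_2$, since it sends each edge $\{x, T_i(x)\}$ of $G_1$ to the edge $\{\phi(x), T_i(\phi(x))\}$ of $G_2$ and is a vertex bijection.

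The substantive step is to show such an odd $k$ exists precisely when $8 \nmid n$. I would argue this via the Chinese remainder theorem. Write $n = 2^a b$ with $b$ odd (so $a \in \{1,2\}$ under the hypothesis) and decompose $k \leftrightarrow (k_1, k_2) \in Z_{2^a} \times Z_b$. Since $\gcd(4,b)=1$, the congruence $4k \equiv 0 \pmod b$ forces $k_2 = 0$, which trivially satisfies the $Z_b$-component of $k^2 \equiv k$. The ring $Z_{2^a}$ is local, so its only idempotents are $0$ and $1$, and the oddness of $k$ pins $k_1 = 1$; the remaining condition $4 = 4k_1 \equiv 0 \pmod{2^a}$ then holds if and only if $a \le 2$, i.e., iff $8 \nmid n$. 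Under the hypothesis, CRT produces a unique such $k$ (namely $k \equiv 1 \pmod{2^a}$, $k \equiv 0 \pmod b$), completing the construction.

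The main obstacle is really just guessing the ansatz $\phi(x) = x + k$; once chosen, the proof reduces to the short $2$-adic CRT calculation above, which also clarifies why $8$ (and not $4$ or $16$) is the correct threshold in the hypothesis. I expect no further subtleties, and the explicit $k$ makes the isomorphism completely concrete in examples such as the $n=12$ graph shown in the figure, where $k = 9$.
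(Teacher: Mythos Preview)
Your proof is correct and uses the same core idea as the paper---an additive shift $\phi(x)=x+k$ intertwining the $T_i$---but your execution is cleaner. The paper splits into three cases $n\equiv 2,4,6\pmod 8$ and writes down $\phi(x)=x+n/2$ (for $n\equiv 2,6$) or $\phi(x)=x+n/4$ (for $n\equiv 4$), verifying each by direct expansion; in the $n=8k+4$ case it even has to choose whether to apply $\phi$ from $G_1$ or $G_2$ depending on the parity of $k$. Your CRT derivation unifies all of this: the two congruences $4k\equiv 0$ and $k^2\equiv k\pmod n$ together with $k$ odd pin down a unique shift, and the condition $4\equiv 0\pmod{2^a}$ makes transparent why the threshold is exactly $8$. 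Your $k$ coincides with the paper's $n/2$ when $n\equiv 2\pmod 4$, and with either $n/4$ or $-n/4$ when $n\equiv 4\pmod 8$ (the latter absorbing the paper's ad hoc parity flip).

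Two minor remarks. First, ``vertex bijection that sends edges to edges'' is not quite the definition of graph isomorphism; you should note that because $\phi^{-1}(x)=x-k$ satisfies the same two congruences, it intertwines the $T_i$ on the odd vertices, so edges of $G_2$ pull back to edges of $G_1$. Second, the figures in the paper use $n=22,24,32$, not $n=12$; your computed $k=9$ for $n=12$ is correct, just not illustrated.
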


\begin{proof}
All maps leave the subsets of even and odd nodes invariant. This shows that the graph
is the union $G_1 \cup G_2$ of two graphs which have no connection. To see the 
isomorphism, we look at three cases, $n=8k+2,n=8k+4$ and $n=8kj+6$. 
In all cases, we construct an isomorphism which satisfies $\phi(T(x)) = T(\phi(x))$.\\
(i) If $n=8k+2$, then the isomorphism is $\phi(x) = x+n/2$. It
maps even nodes to odd nodes and vice versa. We check that
$$  \phi(T(x)) - T(\phi(x))  =  (8k+2) ( 2k+x ) $$
showing that the left hand side is zero modulo $n=8k+2$.
indeed, the left hand side is $(-4*k - 16*k^2 - 2*x - 8*k*x)$ which
agrees with the right hand side.  \\
(ii) If $n=8k+4$, then the isomorphism is $\phi(x) = x+n/4$. Also this
map gets from $G_1$ to $G_2$ and $G_2$ to $G_1$. Again, we check that
$$  \phi(T(x)) - T(\phi(x)) = (8k+4) (k+x)/2  \; . $$
Now, if $k$ is even, we apply this to the even nodes, if $k$ is odd
we apply this map to odd nodes. In goth cases, it is invertible.  \\
(iii) Finally, if $n=8k+6$, then the isomorphism is again $\phi(x) = x+n/2$.
Again we see that $\phi(T(x))-T(\phi(x))$ is a multiple of $n$ for any map $T$.
\end{proof}

\begin{propo}[Miniature: bipartite]
If $T_i(x) = x^2 + c_i$ on $Z_n$ where
$n$ is even and $c_i$ are odd, then the graph is bipartite and does not have any
odd cycles. It in particular does not have any triangles, so that the global
clustering coefficient is zero in that case.
\end{propo}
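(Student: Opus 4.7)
The plan is to exhibit an explicit bipartition of the vertex set $Z_n$ that is respected by every generator $T_i$, which immediately forces the graph to be bipartite and rules out all odd cycles (in particular triangles).

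First I would partition $V = Z_n$ (with $n$ even, so $n/2$ is an integer and parity is well defined on $Z_n$) into the set $A$ of even residues and the set $B$ of odd residues. The key observation is a parity computation: since $c_i$ is odd, for $x \in A$ we have $x^2$ even, so $T_i(x) = x^2 + c_i$ is odd, i.e.\ $T_i(x) \in B$; symmetrically, for $x \in B$ we have $x^2$ odd and $T_i(x) \in A$. Thus every directed edge $(x, T_i(x))$ of the orbital digraph goes between $A$ and $B$, and after passing to the underlying finite simple graph (dropping directions, self-loops, and multi-edges) every edge still has one endpoint in $A$ and one in $B$.

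Next I would invoke the standard fact that a graph admitting such a bipartition is bipartite, and that bipartite graphs contain no odd cycles; in particular they contain no $3$-cycles. Since a triangle at a vertex $v$ is precisely what contributes to the local clustering coefficient, the absence of triangles forces the number $v_2$ of oriented triangles to vanish, and therefore the global clustering coefficient $3v_2/t_2$ is $0$. The mean local clustering coefficient is likewise $0$ because every sphere $S(x)$ induces an edgeless subgraph (any edge in $S(x)$ together with $x$ would produce a triangle, which we have just ruled out).

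There is no real obstacle here: the only subtlety is making sure that parity is a well-defined invariant on $Z_n$, which requires $n$ to be even so that the reduction map $Z_n \to Z_2$ is a well-defined ring homomorphism. Once that is noted, the whole argument is a one-line parity check followed by the textbook equivalence ``no edges inside a part'' $\iff$ ``bipartite'' $\implies$ ``no odd cycles.''
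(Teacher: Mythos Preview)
Your argument is correct and is exactly the paper's approach: split $V$ into even and odd residues and observe that each $T_i$ sends one parity class to the other, yielding a bipartition. You have simply spelled out more carefully than the paper does why parity is well defined on $Z_n$ and why the clustering coefficients vanish.
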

\begin{proof}
We can split the vertex set into an odd and even component $V=V_o+V_e$. 
Each of the maps $T_i$ forms a connection from $V_o$ to $V_e$.
\end{proof}

{\bf Remarks.}  \\
{\bf 1)} We see experimentally that if $n$ a multiple of $8$, then for most cases,
there is no isomorphism of the two component. We expect the
probability of such events goes to zero for $n \to \infty$. Statistically, it is
unlikely that two random graphs are isomorphic, so that we just might have rare events.  \\
{\bf 2)} If $n$ is even and not divisible by $4$ and all generators are polynomials which
preserve odd or even numbers, then the graph $G$ is the disjoint union of
two graphs. If all generators switch from odd to even numbers, then the
graph is bipartite. 
If $\phi(x) = x+n/2$ and $n=4k+2$ and $\phi(T(x))- T(\phi(x))$ is a multiple of $n$
then $\phi$ is a conjugation. In any case, if all $T_i$ switch from even to
odd, then the two parts of the bipartite graph are the even and odd numbers.

\section{Remarks} 

Here are some additional remarks: \\

{\bf 1)} Motivated by the Mandelbrot set which is defined as the set
the parameters $c=a+ib$ for which the Julia set $J_c$ is not connected, we can look at all
the parameter $a,b$ for which the arithmetic graph generated by 
$x^2+a,x^2+b$ acting on $R=Z_n$ is connected.
This is encoded in the matrix $A_{ab}$ which gives the number of 
components of the graph. It depends very much on number theoretical properties. 

{\bf 2)} How many quadratic generators are necessary on $Z_n$ to reach a 
certain edge density? Certainly, $n$ generators $T_i(x) = x^2+i$ suffice to 
generate the complete graph $K_n$. The edge density $|E|/|V|$ is half of the 
average degree $d$. By adding more generators, we increase $d$ and $C$
and decrease $L$. It is in general an interesting {\bf modeling question}
to find a set of polynomial maps which produce a graph similar to a given 
network.  

{\bf 3)} On $R=Z_n$ with $d$ generators of the form $x^2+a_i$, we call
$r_d(n)$ the smallest diameter of a graph which can be achieved. 
How fast does $r_d(n)$ grow for $n \to \infty$? This is already interesting for $d=1$,
where we see not all minimal diameters realized. For $n=90$ and $d=1$ for example,
all graphs are disconnected so that $r_1(90)=\infty$. 
For $n=466$, the minimal diameter is $55$, the next record is $r_1(486)=85$,
then $r_1(1082) = 89, r_1(1454)=93$ and $r_1(1458)=247$. We did not find 
any larger minimal diameter for $n<2000$.  
For $d=2$, the diameters are smaller. For $n=2$ the minimal diameter is $1$,
for $n=4$, it is $2$, for $n=9$ it is $3$, for $n=17$ it is $4$, for $n=30$ it is $5$
for $n=67$ it is $6$. For $n=131$ the minimal diameter is $7$. For $n=233$ we reach $8$. 
We still have to find an $n$ where the minimal diameter is larger than $8$. 
For $d=3$, where we average over $n^3$ graphs, we see 
$r_3(16)=3$ and $r_3(41)=4$ and $r_3(97)=5$. 
We did not find $n$ yet for which $r_3(n)>5$. 

{\bf 4)} Let $c_d(k)$ denote the largest clique size which can be achieved 
by $d$ quadratic maps.  If the monoid $R$ is generated by two generators, 
then the graph can have $K_4$ subgraphs but it is very unlikely. 
For three generators, it happens quite often. For example, for $n=40$
and $f(x)=x^2+4,g(x)=x^2+29, h(x)=x^2+24$ the graph $G$ has $9$ cliques $K_4$. 
The question is already interesting for $d=1$, where we the maximal cliques are triangles. 
Triangles are rare but they occur. The graph generated on $Z_{57}$ with $f(x)=x^2+30$ has 
two  triangles.  This is equivalent to the fact that the Diophantine 
equation $f(f(f(x)))=x$ of degree $8$ has a solution modulo $57$. 

{\bf 5)} Many graphs on a ring $R$ constructed with maps in $T$ have symmetries. 
If all elements $T$ are invertible, then we have a group action on $R$ and this group 
is a subgroup of the automorphism group of the graph. 
We can for example take $n$ prime and $T=\{ f,g \}$ acting on $Z_n^2$ by two Henon type maps
$f(x,y) = (x^2-y,x), g(x,y) = (y,y^2-x)$. The subgroup of permutations of $R$ generated by these two
permutations is a subgroup of the automorphism group of $G$. 

{\bf 6)} We can compute the global clustering coefficient in the graph as an 
expectation $C(n)$, when looking 
at the probability space of all pairs of quadratic maps $\{f(x) = x^2+a, g(x) = x^2+b \}$. We measure
the average to decay like $3/n$. This means that we expect in $3$ of $n$ vertices to have a triangle.
A triangle means either that $T^2(x)=S(x)$ or $T(x)=S^2(x)$. Counting the number 
of solutions to the Diophantine equations $(x^2+a)^2+a = x^2+b$ which is 
$x^4 + x^2(2a-1) + (a + a^2+b) =0$ modulo $n$. The measurements show that when taking
$a,b$ random, we have $3$ solutions in average. We have $4$ solutions in general but
if $n$ factors, then there can be more. On the other hand,
multiple solutions brings the average down.  For three maps $d=3$, we see that the 
clustering coefficients decays like $4/n$ and the average degree to be close to $6$. 
When comparing with random graphs, where the average degree is $p$, we see that the 
clustering coefficient is proportional to $p$.  

{\bf 7)} The average degree of a graph $G=(V,E)$ is $a_2(n)=2|E|/|V|$ by the Euler handshaking lemma.
In our case, the average vertex degree for two quadratic polynomials fluctuates but converges
to $2d$, where $d$ is the number of generators. 
We have computed it for all polynomial pairs on $Z_n$ for $n=1$ to $n=200$ and $d=2$. 
The average vertex degree is exactly $4$ if all $T_1(x) \neq T_2(x)$. 
The difference $a_2(n)-4$ depends on the number of solution pairs $x^2+a=y^2+b$ and
is of number theoretical nature. The local maxima are obtained if $n$ is prime.

{\bf 8)} The characteristic path length is defined as the average distance between two 
vertices in the graph. There are not many analytical results available (see \cite{FFH}). 
The networks generated by two quadratic maps have a
characteristic path length which grows logarithmically, similar than Watts-Strogatz. 
We can slow it down and behave like Barabasi-Albert if we take generators $f,g,fg,gf$
which naturally also brings the clustering coefficient up. An other possibility is to 
add an affine map $T(x)=x+1$ (see \cite{GT2}). 

{\bf 9)} The vertex distribution can depend very much on arithmetic properties. If $n$ 
is prime and $T$ consists of two different quadratic maps $x^2+a$ and $x^2+b$
then only vertex degrees $2,3,4,5,6$ can appear, half have degree $2$ or $6$ and 
half have degree $4$. We can get smoother vertex degree distributions by taking maps like 
$f,g,f(g),g(f),f(f),g(g)$. 

{\bf 10)} If $T$ is generated by a single map, the dimension is $\leq 2$
and the Euler characteristic of the graph is nonnegative. The reason is that 
there are no $K_4$ subgraphs and so no $K_5$ subgraphs. 
We see no $K_{3,3}$ subgraphs indicating that all these graphs are planar.  
In any case, the Euler characteristic is $\chi(G) = c_0-c_1+c_2$ because $c_n=0$ for $n \geq 3$.
By the Euler Poincar\'e formula it is $\chi(G)=b_0-b_1$, where $b_0$ is the number of components and
$b_1$ is the number of cycles $b_1$. Because every attractor is homotopic to a cycle because every
orbit eventually loops on a cycle, the number of components is larger or equal than the 
number of cycles. More generally, the dimension of an arithmetic graph generated by $d$ 
transformations has dimension $\leq d+1$. It would be interesting to get bounds on the 
Euler characteristic.  

{\bf 11)} Instead of $Z_n$ we can take rings like a finite ring $Z_n[i]$ of 
Gaussian integers. One could ask, for which $n$ the graph on the ring of $Z_n[i]$ generated by 
$x^2,x^3,x^5$ is connected. 

\vspace{12pt}
\bibliographystyle{plain}

\end{document}